\newcommand{\codim}{\operatorname{codim}}
\newcommand{\trdeg}{\operatorname{trdeg}}
\def\iso{\buildrel \sim \over \longrightarrow}
\newtheorem{alemma}[subsection]{Lemma}
\newtheorem{alem}[subsection]{Lemma}
\newtheorem{acor}[subsection]{Corollary}
\newtheorem{ahyp}[subsection]{Hypothesis}
\theoremstyle{definition}
\newtheorem{adf}[subsection]{Definition}
\theoremstyle{remark}
\newtheorem{aremark}[subsection]{Remark}
\newtheorem{aexample}[subsection]{Example}
\def\numequation{\addtocounter{subsection}{1}\begin{equation}}
\def\nummultline{\addtocounter{subsubsection}{1}\begin{multline}}
\def\anumequation{\addtocounter{subsection}{1}\begin{equation}}
\def\anummultline{\addtocounter{subsection}{1}\begin{multline}}
\newif\iffinalrun
  \newcommand{\need}[1]{}
  \newcommand{\mar}[1]{}
  \newcommand{\need}[1]{{\tiny *** #1}}
  \newcommand{\mar}[1]{\marginpar{\raggedright\tiny 50footfrog #1}}
\newcommand{\A}{\AA}
\newcommand{\C}{\CC}
\newcommand{\Z}{\ZZ}
\newcommand{\m}{\frakm}
\renewcommand{\AA}{{\mathbb A}}
\newcommand{\CC}{{\mathbb C}}
\newcommand{\GG}{{\mathbb G}}
\newcommand{\ZZ}{{\mathbb Z}}
\renewcommand{\bf}{\ensuremath{\mathbf{f}}}
\newcommand{\cO}{{\mathcal O}}
\newcommand{\cT}{{\mathcal T}}
\newcommand{\cU}{{\mathcal U}}
\newcommand{\cV}{{\mathcal V}}
\newcommand{\cW}{{\mathcal W}}
\newcommand{\cX}{{\mathcal X}}
\newcommand{\cY}{{\mathcal Y}}
\newcommand{\cZ}{{\mathcal Z}}
\newcommand{\frakm}{\mathfrak{m}}
\DeclareMathOperator{\Spec}{Spec}
\DeclareMathOperator{\Spf}{Spf}
\newcommand{\Gm}{\GG_m}
\begin{document}
\title[Components of algebraic stacks]{Dimension theory and components of algebraic stacks}

\author[M. Emerton]{Matthew Emerton}\email{emerton@math.uchicago.edu}
\address{Department of Mathematics, University of Chicago,
5734 S.\ University Ave., Chicago, IL 60637, USA}

\author[T. Gee]{Toby Gee} \email{toby.gee@imperial.ac.uk} \address{Department of
  Mathematics, Imperial College London,
  London SW7 2AZ, UK}

\thanks{M.E.\ was supported in part by NSF grant DMS-1303450. T.G.\ was
  supported in part by a Leverhulme Prize, EPSRC grant EP/L025485/1,
  ERC Starting Grant 306326, and a Royal Society Wolfson Research
  Merit Award.}

\maketitle
\begin{abstract}
  We prove some basic results on the dimension theory of algebraic
  stacks, and on the multiplicities of their irreducible components,
  for which we do not know a reference.
\end{abstract}
\setcounter{tocdepth}{1}
\tableofcontents

In this short note we develop some basic results related to 
the notions of irreducible components and dimensions of locally
Noetherian algebraic stacks.  We work in the basic framework
of the Stacks Project \cite{stacks-project}; 
we also note that most of the results proved here have now been
incorporated
into~\cite[\href{http://stacks.math.columbia.edu/tag/0DQR}{Tag
  0DQR}]{stacks-project} (sometimes with weaker hypotheses than in
this note).

The main results on the dimension theory of algebraic stacks in the
literature that we are aware of are those of~\cite{MR3351957}, which
makes a study of the notions of codimension and relative dimension. We
make a more detailed examination of the notion of the dimension of an
algebraic stack at a point, and prove various results
relating the dimension of the fibres of a morphism at a point in the source
to the dimension of its source and target.  We also prove a result
(Lemma~\ref{lem: dimension formula} below) which
allow us (under suitable hypotheses) to compute the dimension of
an algebraic stack at a point in terms of a versal ring.

While we haven't always tried to optimise our results, we have 
largely tried to avoid making unnecessary hypotheses.  However, in some
of our results, in which we compare certain properties of an algebraic
stack to the properties of a versal ring to this
stack at a point, we have restricted our attention 
to the case of algebraic stacks that are locally finitely presented 
over a locally Noetherian scheme base, all of whose local rings are
$G$-rings. This gives us the convenience of having Artin approximation
available to compare the geometry of the versal ring to the geometry
of the stack itself.  However, this restrictive hypothesis
may not be necessary for the truth 
of all of the various statements that we prove. 
Since it is satisfied in the applications that we have in mind,
though,
we have been content to make it when it helps.

\smallskip

{\em Acknowledgements.}  We would like to thank our coauthors 
Ana Caraiani and David Savitt for their interest in this note,
as well as Brian Conrad and Johan de Jong for their valuable comments
on various parts of the manuscript.

\section{Multiplicities of components of algebraic stacks}
\label{sec:multiplicities}

If $X$ is a locally Noetherian scheme, then we may write $X$ (thought
of simply as a topological space) as a union 
of irreducible components, say $X = \cup T_i.$  Each irreducible
component is the closure of a unique generic point $\xi_i$, 
and the local ring $\mathcal O_{X,\xi_i}$ is a local Artin ring.
We may define the {\em multiplicity} $\mu_{T_i}(X)$ of $X$ along $T_i$ 
to be $\ell(\mathcal O_{X,\xi_i}).$

Our goal here is to generalise this definition to locally
Noetherian algebraic stacks.   If $\cX$ is such a stack, 
then it has an underlying topological space $|\cX|$
(see~\cite[\href{http://stacks.math.columbia.edu/tag/04Y8}{Definition 04Y8}]{stacks-project}),
which is locally
Noetherian
(by~\cite[\href{http://stacks.math.columbia.edu/tag/04Z8}{Definition 04Z8}]{stacks-project}),
and hence which may be written as a union of irreducible
components; we refer to these as the irreducible components of $\cX$.
If $\cX$ is quasi-separated, then $|\cX|$ is sober (by~\cite[Cor.\
5.7.2]{MR1771927}), 
but it need not be in the non-quasi-separated case.\footnote{We follow
      the Stacks Project in allowing our algebraic stacks
      to be non-quasi-separated.  However,
in the applications that we have in mind, the algebraic stacks involved
will in fact
be quasi-separated, and so the reader who prefers to restrict their
attention to the quasi-separated case will lose nothing by doing so.}
(Consider
for example the non-quasi-separated algebraic space $X := \A^1_{\C}/\Z$.)
Furthermore, there is no structure sheaf
on $|\cX|$ whose stalks can be used to define multiplicities.

In order to define the multiplicity of a component of $|\cX|$,
we use the fact that if $U \to \cX$ is a smooth surjection
from a scheme $U$ to $\cX$ (such a surjection exists,
since $\cX$ is an algebraic stack), it induces a surjection
$|U| \to |\cX|$ by~\cite[\href{http://stacks.math.columbia.edu/tag/04XI}{Tag 04XI}]{stacks-project} (here $|U|$ simply denotes the topological
space underlying $U$), and for each irreducible component
$T$ of $|\cX|$ there is an irreducible component $T'$ of
$|U|$ such that $T'$ maps into $T$ with dense image.
(See Lemma~\ref{lem:map of components} below for a proof.)

\begin{adf} 
\label{def:multiplicity}
We define $\mu_T(\cX) := \mu_{T'}(U).$
\end{adf}

Of course, we must check that this is independent
of the choice of chart $U$, and of the choice 
of irreducible component $T'$ mapping to $T$.
We begin by making this verification, as well as
proving Lemma~\ref{lem:map of components}.

\begin{alemma}
\label{lem:map of components}
If $U \to \cX$ is a smooth morphism from a scheme
onto a locally Noetherian algebraic stack $\cX$, then the closure of the image of any irreducible
component of $|U|$ is an irreducible component of $|\cX|$.
If this morphism is furthermore surjective,
then all irreducible components of $|\cX|$ are obtained in this way.
\end{alemma}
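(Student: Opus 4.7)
The plan is to use two standard properties of the smooth morphism $f\colon U\to\cX$: first, that the induced map $|f|\colon|U|\to|\cX|$ is open, since smooth morphisms are flat and locally of finite presentation; and second, that $f$ satisfies going-down on underlying topological spaces, i.e., any generalization of $|f|(\xi')$ in $|\cX|$ lifts to a generalization of $\xi'$ in $|U|$.

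For the first assertion, let $T'$ be an irreducible component of $|U|$. Since $|U|$ is the underlying space of a scheme, hence sober, $T'$ has a unique generic point $\xi'$. Set $\xi:=|f|(\xi')$; by continuity $|f|(T')\subseteq\overline{\{\xi\}}$ and the reverse inclusion of closures is clear, so $\overline{|f|(T')}=\overline{\{\xi\}}$. Going-down applied to any generalization of $\xi$, combined with the genericity of $\xi'$ in the sober space $|U|$, shows that $\xi$ admits no proper generalization in $|\cX|$: any such would lift to a generalization of $\xi'$, which must equal $\xi'$.

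To prove $\overline{\{\xi\}}$ is maximal irreducible closed in $|\cX|$, I would suppose $\overline{\{\xi\}}\subsetneq Z$ with $Z$ irreducible closed and analyze $|f|^{-1}(Z)\subseteq|U|$. First, this closed set strictly contains $T'$: indeed, if they were equal then $|f|(T')=Z\cap|f|(|U|)$, which by openness of $|f|$ is dense in the irreducible $Z$, forcing $\overline{\{\xi\}}=Z$. Pick $u\in|f|^{-1}(Z)\setminus T'$ with $|f|(u)\in Z\setminus\overline{\{\xi\}}$ (obtained by intersecting two dense opens in the irreducible $Z$). Each irreducible component of $|f|^{-1}(Z)$ sits inside some component of $|U|$; by locating a component $D$ of $|f|^{-1}(Z)$ through $u$ whose generic point $\eta$ is also a generic point of $|U|$, the point $\xi^*:=|f|(\eta)\in Z$ has no proper generalization in $|\cX|$ by the same going-down argument, and $\xi^*\neq\xi$ since $|f|(u)\in\overline{\{\xi^*\}}\setminus\overline{\{\xi\}}$. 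Because both $\xi$ and $\xi^*$ have no proper generalizations, neither of the irreducible closed subsets $\overline{\{\xi\}},\overline{\{\xi^*\}}\subseteq Z$ contains the other, and the openness of $|f|$ ensures that their union covers a dense open subset of the irreducible $Z$, contradicting irreducibility.

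For the second assertion, let $T$ be an irreducible component of $|\cX|$. By local finiteness of irreducible components of the locally Noetherian space $|\cX|$, choose $x\in T$ lying in no other component. Using surjectivity of $f$, lift $x$ to $u\in|U|$ and let $T'$ be an irreducible component of $|U|$ containing $u$, with generic point $\xi'$. By the first assertion, $\overline{\{|f|(\xi')\}}$ is an irreducible component of $|\cX|$, and it contains $x$ because $\xi'\leadsto u$ implies $|f|(\xi')\leadsto x$; by the choice of $x$, this component must equal $T$. The main obstacle is the maximality step of the first assertion: since $|\cX|$ need not be sober outside the quasi-separated setting, one cannot simply take a generic point of $Z$, and openness of $|f|$ together with going-down must be combined with a chart-level analysis of components of $|f|^{-1}(Z)$ versus components of $|U|$ to produce the second generic-type point $\xi^*$.
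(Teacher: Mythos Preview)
Your approach via going-down and a direct maximality analysis works harder than necessary and contains gaps. The paper's proof rests on a single topological observation you are overlooking: in \emph{any} topological space, the closure of a non-empty irreducible open subset $V$ is automatically a maximal irreducible closed subset. Indeed, if $\overline{V}\subseteq Z$ with $Z$ irreducible closed, then $V$ is a non-empty open subset of $Z$, hence dense in $Z$, so $Z=\overline{V}$. Combined with the fact that in a locally Noetherian space every irreducible component is the closure of some non-empty irreducible open subset, the first assertion becomes immediate: writing $T'=\overline{V'}$ with $V'\subseteq|U|$ open irreducible, the set $|f|(V')$ is open (since $|f|$ is open) and irreducible (continuous image of irreducible), so $\overline{|f|(T')}=\overline{|f|(V')}$ is maximal irreducible closed in $|\cX|$.

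Your going-down argument correctly shows that $\xi=|f|(\xi')$ has no proper generalization, but as you note, in a non-sober space this does not yield maximality of $\overline{\{\xi\}}$. The subsequent repair has two unjustified steps. First, ``locating a component $D$ of $|f|^{-1}(Z)$ through $u$ whose generic point $\eta$ is also a generic point of $|U|$'': components of the closed set $|f|^{-1}(Z)$ need not be components of $|U|$, and you give no reason why the generic point of a component of $|U|$ through $u$ should land in $Z$ (continuity only gives that its image specializes to $|f|(u)$, and $Z$ is closed under specialization, not generalization). Second, the claim that $\overline{\{\xi\}}\cup\overline{\{\xi^*\}}$ contains a dense open of $Z$ is unexplained. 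In fact, once you notice that $\overline{\{\xi\}}$ already contains the non-empty open subset $|f|(V')$ of $|\cX|$ (with $V'\subseteq T'$ open in $|U|$), this set is dense in $Z$ by irreducibility, forcing $Z=\overline{\{\xi\}}$ immediately; the entire detour through $u$, $D$, and $\xi^*$ is unnecessary, and the argument collapses to the paper's. Your treatment of the second assertion is correct and is essentially the fleshed-out version of what the paper intends.
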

\begin{proof}
This is easily verified, using the fact that $|U| \to |\cX|$ is
continuous and open by~\cite[\href{http://stacks.math.columbia.edu/tag/04XL}{Lem.~04XL}]{stacks-project}, and furthermore surjective if $U \to \cX$ is,
once one recalls that the irreducible components of a locally
Noetherian topological space can be
characterised as being the closures of irreducible open subsets of the space.
\end{proof}

The preceding lemma applies in particular in the case of smooth morphisms
between locally Noetherian schemes.    This particular case is
implicitly invoked in the statement of the following lemma.

\begin{alemma}
\label{lem:multiplicities}
If $U \to X$ is a smooth morphism of locally Noetherian schemes,
and if $T'$ is an irreducible component of $U$, with $T$ denoting
the irreducible component of $X$ obtained as the closure of the
image of $T'$, then $\mu_{T'}(U) = \mu_{T}(X).$
\end{alemma}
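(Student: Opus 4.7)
The plan is to reduce the statement to a standard flat base change computation of lengths. First, let $\xi'$ denote the generic point of $T'$, and let $\eta \in X$ be its image. Since every point of $T'$ is a specialisation of $\xi'$, every point of the image of $T'$ is a specialisation of $\eta$, so the image of $T'$ is contained in $\overline{\{\eta\}}$; since this image is dense in $T$, we conclude $\overline{\{\eta\}} = T$, i.e.\ $\eta = \xi$ is the generic point of $T$. Write $A := \mathcal{O}_{X,\xi}$ and $B := \mathcal{O}_{U,\xi'}$; both are Artinian local rings (as local rings at generic points of irreducible components of locally Noetherian schemes), so that $\mu_T(X) = \ell_A(A)$ and $\mu_{T'}(U) = \ell_B(B)$, and the induced local ring map $A \to B$ is a localisation of the smooth morphism $U \to X$, hence flat.

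Next, I would invoke the standard multiplicativity of length under flat base change: since $A \to B$ is flat local and $A$ itself has finite length as an $A$-module, choosing any composition series of $A$ and tensoring with $B$ yields a filtration of $B$ whose successive quotients are each isomorphic to $B/\frakm_A B$; taking lengths over $B$ then gives $\ell_B(B) = \ell_A(A) \cdot \ell_B(B/\frakm_A B)$. It therefore suffices to verify that $B/\frakm_A B$ is a field, or equivalently that $\ell_B(B/\frakm_A B) = 1$.

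But $B/\frakm_A B$ is the local ring of the fibre $U_\xi := U \times_X \Spec k(\xi)$ at the point $\xi'$; smoothness of $U \to X$ passes to the fibre, so $B/\frakm_A B$ is a regular local ring. On the other hand, the dimension formula for flat local maps gives $\dim B = \dim A + \dim (B/\frakm_A B)$, and both $\dim A$ and $\dim B$ vanish as the rings are Artinian, forcing $\dim (B/\frakm_A B) = 0$. A regular local ring of dimension zero is a field, so $\ell_B(B/\frakm_A B) = 1$, and we conclude $\mu_{T'}(U) = \ell_B(B) = \ell_A(A) = \mu_T(X)$. There is no serious obstacle in this argument: the only mild subtlety is ensuring that $\xi'$ maps to $\xi$, after which the two ingredients (multiplicativity of length under flat base change, and the dimension formula for flat local maps) are standard in commutative algebra.
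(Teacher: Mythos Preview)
Your proof is correct and follows essentially the same route as the paper's: set up the flat local map $A=\mathcal{O}_{X,\xi}\to B=\mathcal{O}_{U,\xi'}$, tensor a composition series of $A$ to reduce to showing $B/\frakm_A B$ is a field, and use smoothness of the fibre $U_\xi$ to conclude. The only cosmetic difference is that the paper deduces $B/\frakm_A B$ is a field by observing it is reduced (being a local ring of a smooth scheme over a field) and implicitly Artinian, whereas you deduce it from regularity together with the flat dimension formula forcing $\dim(B/\frakm_A B)=0$; these are equivalent one-line observations.
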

\begin{proof}Write $\xi'$ for the generic point of $T'$, and $\xi$ for the
  generic point of $T$, so that we need to show that $\ell(\mathcal
  O_{X,\xi})=\ell(\mathcal O_{U,\xi'})$.

  Let $n=\ell(\cO_{X,\xi})$, and choose a sequence $\cO_{X,\xi}=I_0\supset
  I_1\supset\dots\supset I_n=0$ with $I_i/I_{i+1}\cong
  \cO_{X,\xi}/\m_{X,\xi}$. The map $\cO_{X,\xi}\to\cO_{U,\xi'}$ is
  flat, 
so that we have \[I_i\cO_{U,\xi'}/I_{i+1}\cO_{U,\xi'}\cong
  (I_i/I_{i+1})\otimes_{\cO_{X,\xi}}\cO_{U,\xi'} \cong
  \cO_{U,\xi'}/\m_{X,\xi}\cO_{U,\xi'},\]so it suffices to show that
  $\m_{X,\xi}\cO_{U,\xi'}=\m_{U,\xi'}$, or in other words that
  $\cO_{U,\xi'}/\m_{X,\xi}\cO_{U,\xi'}$ is reduced.

Since the map $U\to X$ is smooth,
so is its base-change $U_{\xi} \to \Spec \kappa(\xi).$   As $U_{\xi}$ is a
smooth scheme over a field, it is reduced, and thus so its local ring
at any point.  In particular,
  $\cO_{U,\xi'}/\m_{X,\xi}\cO_{U,\xi'}$,
which is naturally identified with the local ring of $U_{\xi}$ at $\xi'$,
is reduced, as required.
%
\end{proof}

Using this result, we may show that notion of multiplicity
given in
Definition~\ref{def:multiplicity} is in fact well-defined.

\begin{alemma}
If $U_1 \to \cX$ and $U_2 \to \cX$ are two smooth surjections from
schemes to the locally Noetherian algebraic stack $\cX$,
and $T_1'$ and $T_2'$ are irreducible components of $|U_1|$
and $|U_2|$ respectively, the closures of whose images
are both equal to the same irreducible component $T$ of $|\cX|$,
then $\mu_{T_1'}(U_1) = \mu_{T_2'}(U_2)$.
\end{alemma}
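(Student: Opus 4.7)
The plan is to exhibit a single locally Noetherian scheme $W$ smoothly dominating both $U_1$ and $U_2$, together with an irreducible component $T'' \subseteq |W|$ that maps densely to $T_1'$ under one projection and densely to $T_2'$ under the other. Two applications of Lemma~\ref{lem:multiplicities} to the resulting smooth morphisms $W \to U_i$ then yield
\[
\mu_{T_1'}(U_1) = \mu_{T''}(W) = \mu_{T_2'}(U_2),
\]
which is the desired equality.

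To construct $W$, form the $2$-fibre product $V := U_1 \times_\cX U_2$, which is a locally Noetherian algebraic space whose projections $p_i \colon V \to U_i$ are smooth and surjective, being base changes of $U_{3-i} \to \cX$. Choose any smooth surjection $W \to V$ from a scheme $W$; the composites $W \to U_i$ are then smooth surjections of locally Noetherian schemes.

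To locate $T''$, write $\xi_i'$ for the generic point of $T_i'$, and note that both map to the generic point $\xi$ of $T$ in $|\cX|$. By the surjectivity of $|V| \to |U_1| \times_{|\cX|} |U_2|$ for $2$-fibre products of algebraic stacks (a standard fact from the Stacks Project), there exists $\eta \in |V|$ with $p_i(\eta) = \xi_i'$; pick any point $w \in |W|$ mapping to $\eta$. Because $|W|$ is locally Noetherian, $w$ is a specialization of the generic point $w'$ of some irreducible component $T'' \subseteq |W|$. The maps $W \to U_i$ are smooth and hence flat, so the image of $w'$ in $|U_i|$ is a generalization of $\xi_i'$; but $\xi_i'$ admits no proper generalization in $|U_i|$, so this image is exactly $\xi_i'$. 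Lemma~\ref{lem:map of components} now identifies $T''$ as an irreducible component of $|W|$ whose image in $|U_i|$ has closure equal to $T_i'$, and the conclusion of the previous paragraph applies.

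The one genuinely nontrivial ingredient is the topological surjectivity $|V| \to |U_1| \times_{|\cX|} |U_2|$ — the statement that any compatible pair of points of $|U_1|$ and $|U_2|$ lifts to a point of the $2$-fibre product. Given that, the argument is a routine assembly of Lemmas~\ref{lem:map of components} and~\ref{lem:multiplicities}, together with the fact that flat morphisms lift generalizations.
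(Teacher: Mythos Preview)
Your overall strategy matches the paper's: form a scheme $W$ smoothly covering $U_1 \times_\cX U_2$, locate an irreducible component $T''$ of $W$ whose closure-of-image in each $U_i$ is $T_i'$, and apply Lemma~\ref{lem:multiplicities} twice. The paper does exactly this (with an \'etale cover in place of a smooth one, which is immaterial).

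There is, however, a gap in your construction of $T''$. You assert that $\xi_1'$ and $\xi_2'$ ``both map to the generic point $\xi$ of $T$ in $|\cX|$'', and then lift the pair $(\xi_1',\xi_2')$ to $|V|$. But as the paper explicitly warns just before Definition~\ref{def:multiplicity}, the space $|\cX|$ need not be sober when $\cX$ is not quasi-separated, so $T$ may fail to possess a unique generic point. Without soberness you have not justified that the images of $\xi_1'$ and $\xi_2'$ in $|\cX|$ coincide; and if they do not, the pair $(\xi_1',\xi_2')$ does not lie in $|U_1|\times_{|\cX|}|U_2|$ at all, so the surjectivity statement you invoke yields nothing to lift. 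Your closing remark singles out that surjectivity as the one nontrivial ingredient, but the actual subtlety is one step earlier.

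The paper sidesteps this by working with open sets rather than generic points: it chooses dense opens $V_i \subset T_i'$ that are open in $U_i$; their images in $|\cX|$ are non-empty open subsets of the irreducible set $T$ and therefore meet, so $V_1 \times_\cX V_2$ is non-empty by the same surjectivity fact you quote. Any irreducible component of an \'etale scheme cover of $V_1\times_\cX V_2$ then automatically has closure-of-image equal to $T_i'$ in $U_i$ by Lemma~\ref{lem:map of components}, since $V_i$ meets only the component $T_i'$. This small modification repairs your argument in full generality.
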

\begin{proof}
Let $V_1$ and $V_2$ be dense subsets of $T_1'$ and $T'_2$, respectively,
that are open in $U_1$ and $U_2$ respectively.
The images of $|V_1|$ and $|V_2|$ in $|\cX|$ are non-empty open 
subsets of the irreducible subset $T$, and therefore have non-empty
intersection.
By~\cite[\href{http://stacks.math.columbia.edu/tag/04XH}{Tag
    04XH}]{stacks-project}, the map $|V_1\times_\cX V_2|\to
  |V_1|\times_{|\cX|}|V_2|$ is surjective,
and consequently $V_1\times_{\cX} V_2$ is a non-empty algebraic
space; we may therefore choose an \'etale surjection 
$V \to V_1\times_{\cX} V_2$ whose source is a (non-empty) scheme.
If we let $T'$ be any irreducible component of $V$,
then Lemma~\ref{lem:map of components} shows that the closure of
the image of $T'$ in $U_1$ (respectively $U_2$) is equal to $T'_1$
(respectively $T'_2$).

Applying
  Lemma~\ref{lem:multiplicities} twice we find
  that \[\mu_{T_1'}(U_1)=\mu_{T'}(V)=\mu_{T_2'}(U_2),\]as required.
\end{proof} 

It will be convenient to have a comparison between the notion of multiplicity
of an irreducible component given by Definition~\ref{def:multiplicity}
and the related notion of multiplicities of irreducible
components of (the spectra of) versal rings of $\cX$ at finite type points.
In order to have a robust theory of versal rings at finite type points,
we assume for the remainder of this note that $\cX$ is locally
of finite presentation
over a locally Noetherian scheme $S$, all of whose local rings
are $G$-rings.  (This hypothesis on the local rings
may not be necessary for all the assertions that follow, but it makes the
arguments straightforward, and in any case seems to be necessary
for the actual comparison of multiplicities. We also note
that condition this is equivalent to the apparently weaker condition
that the local rings of $S$ at finite type points are $G$-rings;
indeed, the finite type points are dense in 
$S$~\cite[\href{http://stacks.math.columbia.edu/tag/02J4}{Lem.~02J4}]{stacks-project},
and essentially by definition, any localization of a $G$-ring is again a
$G$-ring.)

We begin by recalling the following standard consequence of Artin approximation.
\begin{alemma}
  \label{lem: Artin approximation by smooth morphism}
  Let $\cX$ be an algebraic stack locally of finite presentation
over a locally Noetherian scheme $S$,
all of whose local rings 
are $G$-rings,
and let $x: \Spec k \to \cX$ be a morphism whose source is the spectrum
of a field of finite type over $\cO_S$.

If $A_x$ is a versal ring to $\cX$ at $x$, then we may find a smooth
morphism $U\to\cX$ whose source is a scheme, containing a point $u \in U$ of residue field $k$,
such that the induced morphism $u = \Spec k  \to U \to \cX$
coincides with the given morphism $x$,
and such that there is an isomorphism $\widehat{\cO}_{U,u} \cong
A_x$ compatible with the versal morphism $\Spf A_x \to \cX$
and the induced morphism $\Spf \widehat{\cO}_{U,u} \to U \to \cX$.
\end{alemma}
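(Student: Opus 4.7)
My plan is to deduce the lemma from Artin's algebraization theorem, which applies because $\cX$ is locally of finite presentation over a locally Noetherian base whose local rings are $G$-rings. The definition of a versal ring $A_x$ supplies us with a formal morphism $\Spf A_x \to \cX$ which is formally smooth at the closed point, where $A_x$ is a complete Noetherian local $\cO_S$-algebra with residue field $k$.

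First I would invoke Artin's algebraization theorem: applied to the versal formal object $\Spf A_x \to \cX$, it produces a finite-type $\cO_S$-scheme $\Spec B$, a point $b \in \Spec B$ with residue field $k$, a morphism $\Spec B \to \cX$ sending $b$ to $x$, and an isomorphism $\widehat{B_b} \cong A_x$ of $\cO_S$-algebras under which the composite $\Spf \widehat{B_b} \to \Spec B \to \cX$ coincides with the given map $\Spf A_x \to \cX$. The $G$-ring hypothesis on the local rings of $S$ is precisely what is required for Artin approximation to apply in this generality.

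Next I would check that $\Spec B \to \cX$ is smooth in a Zariski-open neighborhood of $b$. Since the composite $\Spf A_x \to \cX$ is formally smooth at the closed point (by versality of $A_x$), and since the completion map $\cO_{B,b} \to \widehat{B_b} = A_x$ is faithfully flat with trivial induced residue field extension, faithfully flat descent allows one to conclude that $\Spec B \to \cX$ is flat at $b$ and has geometrically regular fibre there, and hence is smooth at $b$. Smoothness being an open condition, I may then restrict $\Spec B$ to an open neighborhood $U$ of $b$ on which $U \to \cX$ is smooth, and set $u := b$. The required isomorphism $\widehat{\cO}_{U,u} = \widehat{B_b} \cong A_x$ and its compatibility with the versal morphism and the induced morphism $\Spf \widehat{\cO}_{U,u} \to U \to \cX$ are then built into the construction.

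I expect the main obstacle to be the correct invocation of Artin's algebraization theorem in the stack-theoretic setting: standard references state it for schemes or algebraic spaces, and transferring it to algebraic stacks requires either a stack-theoretic variant (for instance via verification of Artin's axioms) or a reduction via a smooth atlas of $\cX$. The subsidiary step of descending smoothness from the completion back to the local ring at $b$ is more routine, but genuinely uses both the Noetherian and $G$-ring hypotheses in order to relate regularity and flatness of $\widehat{B_b}$ to the corresponding properties of $\cO_{B,b}$.
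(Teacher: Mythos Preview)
Your proposal is correct and follows essentially the same route as the paper: Artin algebraization to produce a finite-type scheme $U$ with a point $u$ whose completed local ring recovers $A_x$, followed by shrinking $U$ around $u$ to obtain smoothness. The paper resolves your stated obstacle by decomposing the algebraization into two steps---first effectivity of the formal object $\Spf A_x \to \cX$ (which holds because $\cX$ is algebraic), and then Artin approximation for the resulting morphism $\Spec A_x \to \cX$ (using that $\cX$ is limit-preserving)---and then cites a reference for the passage to smoothness after shrinking, rather than arguing via descent from the completion as you do.
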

\begin{proof}Since $\cX$ is an algebraic stack,
the versal morphism $\Spf A_x \to \cX$ is effective,
i.e.\ can be promoted to a morphism $\Spec A_x \to \cX$
\cite[\href{http://stacks.math.columbia.edu/tag/07X8}{Lem.~07X8}]{stacks-project}.
By assumption $\cX$ is locally of finite presentation over $S$,
and hence limit preserving \cite[Lem.~2.1.9]{EGstacktheoreticimages},
and so Artin approximation
(see \cite[\href{http://stacks.math.columbia.edu/tag/07XH}{Lem.~07XH}]{stacks-project} and its proof)
shows that we may find a morphism $U \to \cX$ with source a finite type 
$S$-scheme, containing a point $u \in U$ of residue field $k$,
satisfying all of the required properties except possibly the
smoothness of $U\to\cX$.

Since $\cX$ is an algebraic stack, we see that if we replace
$U$ by a sufficiently small neighbourhood of $u$, we may in addition
assume that $U \to \cX$ is smooth
(see e.g.\ \cite[Lem.~2.4.7~(4)]{EGstacktheoreticimages}), as required. 
\end{proof}

\begin{alemma}
\label{lem:branches}
Let $\cX$ be an algebraic stack locally of finite presentation
over a locally Noetherian scheme $S$,
all of whose local rings 
are $G$-rings,
and let $x: \Spec k \to \cX$ be a morphism whose source is the spectrum
of a field of finite type over $\cO_S$.
If $A_x$ and $A'_x$ are two versal rings to $\cX$ at $x$,
then the multi-sets of irreducible components of $\Spec A_x$
and of $\Spec A'_x$
{\em (}in which each component is counted with its multiplicity{\em )},
are in canonical bijection.

Furthermore, there is a natural surjection from the 
set of irreducible components of each of $\Spec A_x$ and $\Spec A'_x$
to the set of irreducible components of $|\cX|$ containing the
class of $x$ in $|\cX|$; this surjection sends components
that correspond by the above bijection to the same component
of $|\cX|$; and this surjection preserves multiplicities.
\end{alemma}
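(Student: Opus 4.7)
The plan is to reduce to a power-series calculation via two applications of Artin approximation. We begin by applying Lemma~\ref{lem: Artin approximation by smooth morphism} to each of $A_x$ and $A'_x$ to obtain smooth morphisms $f\colon U\to\cX$ and $f'\colon U'\to\cX$ from schemes, together with points $u\in U$, $u'\in U'$ of residue field $k$ representing $x$, and isomorphisms $\widehat\cO_{U,u}\cong A_x$ and $\widehat\cO_{U',u'}\cong A'_x$ compatible with the versal morphisms. The chosen identifications of the $\cX$-valued points $f\circ u$ and $f'\circ u'$ with $x$ then assemble to a canonical $k$-point $v\colon\Spec k\to V$ of the algebraic space $V:=U\times_\cX U'$, and a second application of Lemma~\ref{lem: Artin approximation by smooth morphism}, now to $V$ at $v$, will produce a smooth morphism $W\to V$ from a scheme and a point $w\in W$ of residue field $k$ lying over $v$.

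The two compositions $W\to V\to U$ and $W\to V\to U'$ are then smooth morphisms of schemes with $\kappa(w)=\kappa(u)=\kappa(u')=k$, so the standard structure theory of smooth morphisms at such a residue-field-preserving point yields isomorphisms
\[A_x[[t_1,\ldots,t_n]]\cong\widehat\cO_{W,w}\cong A'_x[[s_1,\ldots,s_m]]\]
compatible with the versal morphisms to $\cX$. It then suffices to check that for any Noetherian ring $A$, the assignment $\frakp\mapsto\frakp A[[t_1,\ldots,t_n]]$ is a length-preserving bijection between the minimal primes of $A$ and those of $A[[t_1,\ldots,t_n]]$. The bijection itself is immediate from the facts that $A[[t]]/\frakp A[[t]]=(A/\frakp)[[t]]$ is a domain and that $A\to A[[t]]$ is faithfully flat (so contractions of minimal primes are minimal); preservation of length follows from the multiplicativity formula $\ell_S(S)=\ell_R(R)\cdot\ell_S(S/\frakm_RS)$ for a flat local extension $R\to S$, applied to $R=A_\frakp\to S=(A[[t]])_{\frakp A[[t]]}$, whose fibre is the fraction field of $(A/\frakp)[[t]]$ and so has length one. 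This gives the required canonical bijection of multi-sets of components of $\Spec A_x$ and $\Spec A'_x$.

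Next we define the surjection from components of $\Spec A_x$ to components of $|\cX|$ through $x$: a minimal prime $\frakq$ of $A_x=\widehat\cO_{U,u}$ contracts by flatness to a minimal prime $\frakp:=\frakq\cap\cO_{U,u}$ of $\cO_{U,u}$, corresponding to an irreducible component $T'$ of $U$ through $u$ whose image in $|\cX|$ has closure an irreducible component $T$ of $|\cX|$ through $x$ by Lemma~\ref{lem:map of components}; we send $\frakq\mapsto T$. For surjectivity we use that the flat morphism $U\to\cX$ lifts generalizations: given any component $T$ of $|\cX|$ through $x$ with generic point $\eta_T$, we lift the generalization $x\leadsto\eta_T$ to some $\eta'\in|U|$ with $u\leadsto\eta'$ and $\eta'\mapsto\eta_T$, and the component of $U$ containing $\eta'$ then passes through $u$ and has image-closure equal to $T$ (its image-closure is a component of $|\cX|$ containing $T$, so equals $T$ by maximality). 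The corresponding minimal prime of $\cO_{U,u}$ lifts to a minimal prime of $A_x$ by faithful flatness. Compatibility of the surjections for $\Spec A_x$ and $\Spec A'_x$ with the bijection of the previous paragraph is immediate from the commutativity of $\Spec\widehat\cO_{W,w}\to U\to\cX$ and $\Spec\widehat\cO_{W,w}\to U'\to\cX$.

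The main obstacle will be multiplicity preservation: we must show that $\mu_C(\Spec A_x)=\mu_T(\cX)$ for $C$ the component of $\Spec A_x$ corresponding to $\frakq$ and $T$ its image. Since $\mu_T(\cX)=\mu_{T'}(U)=\ell((\cO_{U,u})_\frakp)$, applying the length multiplicativity formula to the flat local extension $(\cO_{U,u})_\frakp\to(\widehat\cO_{U,u})_\frakq$ reduces the claim to showing that the fibre $(\widehat\cO_{U,u}/\frakp\widehat\cO_{U,u})_\frakq$ is a field. This is where the hypothesis that the local rings of $S$ are $G$-rings is essential: since the class of $G$-rings is stable under quotients and localizations of finitely generated algebras, $\cO_{U,u}/\frakp$ is a reduced Noetherian local $G$-ring; and reducedness is preserved by completion for $G$-rings (the formal fibres being geometrically regular, hence reduced). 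Thus $\widehat\cO_{U,u}/\frakp\widehat\cO_{U,u}=\widehat{\cO_{U,u}/\frakp}$ is reduced, so its localization at the minimal prime $\frakq/\frakp\widehat\cO_{U,u}$ is a field, as required.
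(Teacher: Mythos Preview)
Your argument is correct and follows the same overall strategy as the paper: realise both versal rings as completed local rings of smooth charts $U,U'\to\cX$ via Artin approximation, compare them through the fibre product $U\times_{\cX}U'$ by exhibiting a common Noetherian complete local ring that is a formal power series ring over each of $A_x$ and $A'_x$, and use the $G$-ring hypothesis to control multiplicities when passing from a local ring to its completion.

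Two small differences are worth noting. First, where the paper works directly with the complete local ring $\widehat{\cO}_{U'',u''}$ of the algebraic space $U''=U\times_\cX U'$ (and then passes to an \'etale scheme cover when needed), you instead invoke Lemma~\ref{lem: Artin approximation by smooth morphism} a second time to produce a scheme chart $W\to V$; this is harmless but heavier than necessary, since an \'etale neighbourhood of $v$ already suffices. Second, your multiplicity comparison is actually more direct than the paper's: you compare $(\cO_{U,u})_{\frakp}$ and $(\widehat{\cO}_{U,u})_{\frakq}$ immediately, using that $\cO_{U,u}/\frakp$ is a reduced local $G$-ring so its completion stays reduced, whereas the paper routes the comparison through $U''$ and appeals to regularity of the formal fibre $\widehat{\cO}_{U'',u''}\otimes\kappa(\frakp)$. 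Both arguments come to the same thing.

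One minor technical point: in your surjectivity argument you speak of ``the generic point $\eta_T$'' of an irreducible component $T$ of $|\cX|$, but the paper explicitly allows non-quasi-separated $\cX$, in which case $|\cX|$ need not be sober. This is easily repaired (for instance, pull $T$ back to $U$ as a reduced closed substack, take an irreducible component of $T\times_\cX U$ through $u$, and enlarge it to an irreducible component of $U$ through $u$; its image then has closure $T$ by Lemma~\ref{lem:map of components} and maximality), and indeed the paper's own proof does not spell out surjectivity at all.
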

\begin{proof}
By Lemma~\ref{lem: Artin approximation by smooth morphism} we can find
smooth morphisms $U,U' \to \cX$ whose sources are schemes, and  points
$u,u'$ of $U,U'$ respectively, both with
residue field $k$, such that the induced morphisms $\widehat{\cO}_{U,u}
\to U \to \cX$ and $\widehat{\cO}_{U',u'}
\to U' \to \cX$ can be identified respectively with the versal
morphisms $\Spf A_x\to\cX$ and $\Spf A'_x \to \cX$.
We then form the fibre product $U'' := U \times_{\cX} U'$; this is an algebraic
space over $S$, and the two monomorphisms $u = \Spec k \to U$
and $u' = \Spec k \to U'$ induce a monomorphism $u'' = \Spec k \to U''.$
Following \cite[Prop.~2.2.14, Def.~2.2.16]{EGstacktheoreticimages},
we consider the complete local ring $\widehat{\cO}_{U'',u}$
of $U''$ at $u''$.

Since $U'' \to U$ is smooth, we see that the induced morphism
$A_x = \widehat{\cO}_{U,u} \to \widehat{\cO}_{U'',u''}$ induces
a  smooth morphism of representable functors,
in the sense
of~\cite[\href{http://stacks.math.columbia.edu/tag/06HG}
{Def.~06HG}]{stacks-project},
and hence, by~\cite[\href{http://stacks.math.columbia.edu/tag/06HL}
{Lem.~06HL}]{stacks-project},
we see that $\widehat{\cO}_{U'',u''}$ is a formal power series
ring over $A_x$.  Similarly, it is a formal power series
ring over $A'_x$.  Recall that if
$A$ is a complete local ring and $B$ is a formal
power series ring in finitely many variables over $A$,
then the irreducible components of $\Spec B$ are in a natural multiplicity
preserving bijection with the irreducible components of $\Spec A$.
Thus, we obtain multiplicity preserving bijections
between the multi-sets of irreducible components of each
of $\Spec A_x$ and $\Spec A'_x$ with the multi-set
of irreducible components of $\Spec \widehat{\cO}_{U'',u''}$,
and hence between these two multi-sets themselves.

The morphism $\Spec A_x \to \cX$ factors through $U$,
and the scheme-theoretic image of each irreducible component 
of $\Spec A_x$ is an irreducible component of $U$ (as follows
from the facts that $\Spec A_x \to U$ is flat,
and that flat morphisms satisfy the {\em going-down theorem}).
Composing with the natural map
from the set of irreducible components of $U$ to the set
of irreducible components of $\cX$, 
we obtain a morphism from the set of irreducible components of $\Spec A_x$
to the set of irreducible components of $|\cX|$.
A consideration of the commutative diagram
$$\xymatrix{|\Spec \cO_{U'',u''}| \ar[r]\ar[d] & |U''| \ar[d] \\
|\Spec A_x| \ar[r] & |\cX| }$$
and of the analogous diagram with $A'_x$ in place of $A_x$,
shows that this map, and the corresponding map for $A'_x$,
are compatible with the bijection constructed
above between the irreducible components of $\Spec A_x$ and 
the irreducible components of $\Spec A'_x$.

It remains to show that this map, from the irreducible components
of $\Spec A_x$ to those of $\cX$, is multiplicity preserving.
A consideration of the definition of the multiplicity of an irreducible
component of $\cX$, and of the preceding constructions,
shows that it suffices to show that the map from the set of 
irreducible components of $\Spec \widehat{\cO}_{U'',u''}$ to
the set of irreducible components of $U''$, given by taking Zariski closures,
is multiplicity preserving.  As we will see, this follows from the assumption
that the local rings of $S$ 
are $G$-rings. 

More precisely, noting that it suffices to compare these multiplicities
after making an \'etale base-change, we may replace $U''$ 
by a scheme which covers it via an \'etale map, and hence assume
that $U''$ itself is a scheme, so that the local ring
$\cO_{U'',u''}$ is defined. (Alternatively, we could apply
Artin approximation to the versal morphism
$\Spf \widehat{\cO}_{U'',u''} \to U''$, so as to replace $U''$ by a scheme.)  
The scheme $U''$ is of finite type over $S$,
and hence the local ring $\cO_{U'',u''}$ is a $G$-ring.
Let $\mathfrak p$ be a minimal prime ideal of $\cO_{U'',u''}$,
corresponding to an irreducible component of $U''$ passing
through $u''$, and let $\mathfrak q$ be a minimal prime
of $\widehat{\cO}_{U'',u''}$ lying over $\mathfrak p$
(corresponding to an irreducible component of $\Spec \widehat{\cO}_{U'',u''}$
whose closure in $U''$ is the irreducible component corresponding to
$\mathfrak p$);
we have to show that the length
of $(\widehat{\cO}_{U'',u''})_{\mathfrak q}$ is equal to the length of
$(\cO_{U'',u''})_{\mathfrak p}$.
Since $\mathfrak q$ lies over $\mathfrak p,$
there is a natural isomorphism
$$ (\widehat{\cO}_{U'',u''})_{\mathfrak q}
\cong 
\bigl(\widehat{\cO}_{U'',u''} \otimes
(\cO_{U'',u''})_{\mathfrak p}\bigr)_{\mathfrak q}.$$
Now if $\ell$ is the length of 
$(\cO_{U'',u''})_{\mathfrak p},$
then we may find a filtration of length $\ell$ on
$(\cO_{U'',u''})_{\mathfrak p}$, each of whose graded
pieces is isomorphic to $\kappa(\mathfrak p)$.
This induces a corresponding filtration on
$\bigl(\widehat{\cO}_{U'',u''} \otimes
(\cO_{U'',u''})_{\mathfrak p}\bigr)_{\mathfrak q},$
each of whose graded pieces is isomorphic to 
$\bigl(\widehat{\cO}_{U'',u''} \otimes
\kappa(\mathfrak p)\bigr)_{\mathfrak q}.$
Since $\cO_{U'',u''}$ is a $G$-ring,
the formal fibre 
$\widehat{\cO}_{U'',u''} \otimes
\kappa(\mathfrak p)$ is regular.
Since $\mathfrak q$ is a minimal prime in this ring,
the localization
$\bigl(\widehat{\cO}_{U'',u''} \otimes
\kappa(\mathfrak p)\bigr)_{\mathfrak q}$
is thus a field, and hence equal to $\kappa(\mathfrak q)$.
We conclude that $(\widehat{\cO}_{U'',u''})_{\mathfrak q}$
has length $\ell$, as required.

\end{proof}

\begin{adf}
If $\cX$ is an algebraic stack locally of finite presentation
over a locally Noetherian scheme $S$ all of whose local rings are $G$-rings,
if $x: \Spec k \to \cX$ is a morphism whose source is the spectrum
of a field of finite type over $\cO_S$,
and if $A_x$ is a versal ring to $\cX$ at $x$,
then we define the set of \emph{formal branches of $\cX$ through $x$} to be 
the set of irreducible components of $\Spec A_x$,
and we define the multiplicity of a branch to be the multiplicity
of the corresponding component in $\Spec A_x$.
\end{adf}

Lemma~\ref{lem:branches} shows, in the context of the preceding
definition, that the set of formal branches of $\cX$ through $x$,
and their multiplicities, are well-defined independently of
the choice of versal ring used to compute them.  It also
shows that there is a natural map from the set 
of formal branches of $\cX$ through $x$ to the set of irreducible
components of $|\cX|$ containing the class of $x$,
and that this map preserves multiplicities.


%

\medskip

As a closing remark,
we note that
it is sometimes convenient to think of an irreducible component of $\cX$ as a
closed substack. To this end, if $\cT$ is an irreducible component of $\cX$, i.e.\
an irreducible component of $|\cX|$, then we endow $\cT$ with its induced reduced
substack structure
(see~\cite[\href{http://stacks.math.columbia.edu/tag/050C}{Def.~050C}]{stacks-project}).

 \section{Dimension theory of algebraic stacks}\label{sec:appendix on
   dimension of algebraic stacks}
%
In this section we discuss some concepts related to the dimension
theory of locally Noetherian algebraic stacks.  
Since we intend to make arguments with them, it will be helpful
to recall the basic definitions related to dimensions,
beginning with the case of schemes, and then the case of algebraic spaces.

\begin{adf}
If $X$ is a scheme,
then we define the dimension $\dim(X)$ of $X$
to be the Krull dimension of the 
topological space underlying $X$,
while if $x$ is a point of $X$,
then we define the dimension $\dim_x (X)$ of $X$ at $x$ to be the 
minimum of the dimensions of the open subsets $U$ of $X$ containing
$x$~\cite[\href{http://stacks.math.columbia.edu/tag/04MT}{Def.~04MT}]
{stacks-project}.
One has the relation $\dim(X) = \sup_{x \in X}
\dim_x(X)$~\cite[\href{http://stacks.math.columbia.edu/tag/04MU}{Lem.~04MU}]
{stacks-project}.

If $X$ is locally Noetherian, then $\dim_x(X)$ coincides with the supremum 
of the dimensions at $x$
of the irreducible components of $X$ passing through $x$.
\end{adf}

\begin{adf}
\label{def:dimension for algebraic spaces}
If $X$ is an algebraic space and $x \in |X|$,
then we define $\dim_x X = \dim_u U,$ where $U$ is any scheme
admitting an \'etale surjection $U \to X$,
and $u\in U$ is any point lying over
$x$~\cite[\href{http://stacks.math.columbia.edu/tag/04N5}{Def.~04N5}]
{stacks-project}. We set $\dim(X) = \sup_{x \in |X|}
\dim_x(X)$.
\end{adf}

\begin{aremark}
In general, the dimension of the algebraic space $X$ at a point $x$
may not coincide with the dimension of the underlying topological space
$|X|$ at $x$.  E.g.\ if $k$ is a field of characteristic zero and
$X =  \A^1_k / \Z$, then $X$ has dimension $1$ (the dimension
of $\A^1_k$) at each of its points,
while $|X|$ has the indiscrete topology, and hence is of Krull
dimension zero.   On the other hand,
in~\cite[\href{http://stacks.math.columbia.edu/tag/02Z8}{Ex.~02Z8}]
{stacks-project} 
there is given an example of an algebraic space
which is of dimension $0$ at each of its points, while $|X|$ is
irreducible of Krull dimension $1$, and admits a generic point (so that the
dimension of $|X|$ at any of its points is $1$); see also the discussion
of this example 
in~\cite[\href{http://stacks.math.columbia.edu/tag/04N3}{Tag~04N3}]
{stacks-project} 

On the other hand, if $X$ is a {\em decent} algebraic space, in the sense
of~\cite[\href{http://stacks.math.columbia.edu/tag/03I8}{Def.~03I8}]
{stacks-project}
(in particular, if $X$ is quasi-separated;
see~\cite[\href{http://stacks.math.columbia.edu/tag/03I7}{Def.~03I7}]
{stacks-project}),
then in fact the dimension of $X$ at $x$ does coincide with the dimension
of $|X|$ at $x$;
see~\cite[\href{http://stacks.math.columbia.edu/tag/0A4J}{Lem.~0A4J}]
{stacks-project}.
\end{aremark}

In order to  define the dimension of an algebraic stack,
it will be useful to first have the notion of the relative dimension,
at a point in the source, 
of a morphism whose source is an algebraic space,
and whose target is an algebraic stack.  The definition is slightly
involved, just because (unlike in the case of schemes) the points of an algebraic stack, or an algebraic
space, are not describable as morphisms from the spectrum of a field, but only as equivalence classes of such.

\begin{adf}
\label{def:relative dimension}
If $f:T \to \cX$ is a locally of finite type morphism from an algebraic space
to an algebraic stack,
and if $t \in |T|$ is a point with image $x \in | \cX|$, then we define 
{\em the relative dimension} of $f$ at $t$, denoted
$\dim_t(T_x),$
as follows: 
choose a morphism $\Spec k \to \cX$, with source the spectrum of 
a field, which represents $x$, and choose a point $t' \in | T\times_{\cX}
\Spec k |$ mapping to $t$ under the projection to $|T|$
(such a point $t'$ exists,
by~\cite[\href{http://stacks.math.columbia.edu/tag/04XH}{Lem.~04XH}]
{stacks-project}); then
$$\dim_t(T_x) := \dim_{t'}(T \times_{\cX} \Spec k ).$$
(Note that since $T$ is an algebraic space and $\cX$ is an algebraic stack,
the fibre product $T\times_{\cX} \Spec k$ is an algebraic space,
and so the quantity on the right hand side of this proposed definition
is in fact defined, by Definition~\ref{def:dimension for algebraic spaces}.)
\end{adf}

\begin{aremark}
\label{rem:relative dimension}
(1) 
One easily verifies (for example, by using the invariance
of the relative dimension of locally of finite type morphisms of schemes
under base-change;
see e.g.~\cite[\href{http://stacks.math.columbia.edu/tag/02FY}{Lem.~02FY}]
{stacks-project})
that $\dim_t(T_x)$ is well-defined, independently of the choices
used to compute it. 

(2)
In the case that $\cX$ is also an algebraic space,
it is straightforward to confirm that this definition agrees with
the definition of relative dimension given 
in~\cite[\href{http://stacks.math.columbia.edu/tag/04NM}{Def.~04NM~(3)}]
{stacks-project}.
\end{aremark}

We next recall the following lemma, on which the definition of
the dimension of a locally Noetherian algebraic stack is founded.

\begin{alemma}
\label{lem:behaviour of dimensions w.r.t. smooth morphisms}
If $f: U \to X$ is a smooth morphism of locally Noetherian algebraic
spaces, and
if $u \in |U|$ with image $x \in |X|$,
then $$\dim_u (U) = \dim_x(X) + \dim_{u} (U_x)$$
{\em (}where of course $\dim_u (U_x)$ is defined via
Definition~{\em \ref{def:relative dimension})}.
\end{alemma}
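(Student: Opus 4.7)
The plan is to reduce to the well-known analogue of this formula for smooth morphisms of locally Noetherian \emph{schemes} by making a compatible choice of \'etale charts. First, choose an \'etale surjection $X' \to X$ from a scheme $X'$ together with a point $x' \in X'$ lying over $x$, and form $U' := U \times_X X'$; the projection $U' \to U$ is then an \'etale surjection of algebraic spaces, while $U' \to X'$ is smooth by stability of smoothness under base-change. Next choose an \'etale surjection $U'' \to U'$ with $U''$ a scheme, so that the composite $U'' \to X'$ is a smooth morphism of locally Noetherian schemes. Since $|U'| \to |U|\times_{|X|}|X'|$ is surjective (an instance of Tag~04XH, which is stated for stacks but applies in particular to algebraic spaces), we may choose $u' \in |U'|$ lying simultaneously over $u$ and over $x'$, and then lift $u'$ to a point $u'' \in U''$.

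By construction $u''$ maps to $u$ via the \'etale surjection $U'' \to U$ and maps to $x'$ via $U'' \to X'$, so Definition~\ref{def:dimension for algebraic spaces} immediately gives
\[
\dim_u(U) \;=\; \dim_{u''}(U'') \qquad\text{and}\qquad \dim_x(X) \;=\; \dim_{x'}(X').
\]
To handle the relative dimension, I would use the freedom in Remark~\ref{rem:relative dimension}~(1) to represent $x$ by the morphism $\Spec \kappa(x') \to X' \to X$. Then $U \times_X \Spec\kappa(x') = U'\times_{X'}\Spec\kappa(x')$, and this is \'etale-covered by $U''_{x'} := U''\times_{X'}\Spec\kappa(x')$. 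Taking $u_0 \in |U\times_X \Spec\kappa(x')|$ to be the image of $u''$, which does lie over $u$, Definitions~\ref{def:relative dimension} and~\ref{def:dimension for algebraic spaces} give
\[
\dim_u(U_x) \;=\; \dim_{u_0}\bigl(U\times_X \Spec\kappa(x')\bigr) \;=\; \dim_{u''}(U''_{x'}).
\]
Applying the classical dimension formula for the smooth morphism of locally Noetherian schemes $U''\to X'$ at $u''$, namely $\dim_{u''}(U'') = \dim_{x'}(X') + \dim_{u''}(U''_{x'})$, and substituting the three identifications above yields the claimed equality.

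The only real delicacy is the bookkeeping: one must arrange for a single point $u''$ of the scheme $U''$ to simultaneously represent $u$, $x'$ and the fibre point $u_0$. This is what forces us to pass through the intermediate algebraic space $U'$ and invoke surjectivity of $|U'| \to |U|\times_{|X|}|X'|$ before taking the \'etale cover by $U''$; once that compatibility is in place, every other step is either a direct appeal to the definitions or a citation of the scheme case.
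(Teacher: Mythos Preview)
Your argument is correct: the reduction to the scheme case via compatible \'etale charts, with the key bookkeeping step being the surjectivity of $|U'| \to |U|\times_{|X|}|X'|$ to align the chosen points, is exactly the right strategy, and all the identifications you make are valid.

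The paper does not give an argument at all: it simply cites \cite[\href{http://stacks.math.columbia.edu/tag/0AFI}{Lem.~0AFI}]{stacks-project} and notes (via Remark~\ref{rem:relative dimension}~(2)) that the definition of $\dim_u(U_x)$ used here agrees with the one used there.  Your proof is essentially an unpacking of that cited result, so there is no genuine difference in approach---you have written out the details that the paper outsources.
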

\begin{proof}
See~\cite[\href{http://stacks.math.columbia.edu/tag/0AFI}{Lem.~0AFI}]
{stacks-project},
noting that the definition of $\dim_u (U_x)$ used here coincides with
the definition used there, by Remark~\ref{rem:relative dimension}~(2).
\end{proof}

\begin{adf}
\label{def:dimension for stacks}
If $\cX$ is a locally Noetherian algebraic stack,
and $x \in |\cX|$, then we  define the dimension $\dim_x(\cX)$
of $\cX$ at $x$ as follows:
let $U \to \cX$ be a smooth morphism 
from a scheme (or, more generally, from an algebraic space) to $\cX$
containing $x$ in its image,
let $u$ be any point of $|U|$ mapping to $x$,
and define
$$\dim_x(\cX) :=  \dim_u (U) - \dim_{u}( U_x)$$
(where the relative dimension $\dim_u(U_x)$ is defined
by Definition~\ref{def:relative dimension}).
\end{adf}

\begin{aremark}
	\label{rem:computing dims}
The preceding definition is justified by the formula of
Lemma~\ref{lem:behaviour of dimensions w.r.t. smooth morphisms},
and one can use that lemma to verify that $\dim_x(\cX)$ is well-defined,
independently of the choices used to compute it.
Alternatively (employing the notation of the definition, and choosing 
$U$ to be a scheme),
one can compute $\dim_u(U_x)$ by choosing
the representative of $x$ to  be the composite
$\Spec \kappa(u) \to U \to \cX$, where the first morphism is the canonical
one with image $u \in U$. 
Then, if we write $R := U\times_{\cX} U$, and let $e: U \to R$ denote the
diagonal morphism, the invariance of relative dimension under base-change
shows that $\dim_u \bigl(U_x) = \dim_{e(u)}(R_u),$
and thus the preceding definition of $\dim_x (\cX)$ coincides
with the definition as $\dim_u (U) - \dim_{e(u)}(R_u)$ given
in~\cite[\href{http://stacks.math.columbia.edu/tag/0AFN}{Def.~0AFN}]
{stacks-project}, 
which is shown to be independent of choices 
in~\cite[\href{http://stacks.math.columbia.edu/tag/0AFM}{Lem.~0AFM}]
{stacks-project}. 
\end{aremark}

\begin{aremark}
For Deligne--Mumford stacks which are suitably decent
(e.g.\ quasi-separated),
it will again be the case that $\dim_x(\cX)$ coincides with the topologically
defined quantity $\dim_x |\cX|$.  However, for more general Artin stacks, 
this will typically not be the case.  For example, if $\cX := [\A^1/\Gm]$
(over some field, with the quotient being taken with
respect to the usual multiplication action of $\Gm$ on $\A^1$),
then  $|\cX|$ has two points, one the specialisation of the other (corresponding
to the two orbits of $\Gm$ on $\A^1$), and hence is of dimension $1$ as
a topological space; but $\dim_x (\cX) = 0$ for both points $x \in |\cX|$.
(An even more extreme example is given by the classifying space 
$[\Spec k/\Gm]$, whose dimension at its unique point
is equal to~$-1$.)
\end{aremark}

We can now extend Definition~\ref{def:relative dimension} to the context
of (locally of finite type) 
morphisms between (locally Noetherian) algebraic stacks.

\begin{adf}
\label{def:relative dimension for stacks}
If $f:\cT \to \cX$ is a locally of finite type morphism between 
locally Noetherian algebraic stacks,
and if $t \in |\cT|$ is a point with image $x \in | \cX|$, then we define 
the {\em relative dimension} of $f$ at $t$, denoted
$\dim_t(\cX_x),$
as follows: 
choose a morphism $\Spec k \to \cX$, with source the spectrum of 
a field, which represents $x$, and choose a point $t' \in | \cT\times_{\cX}
\Spec k |$ mapping to $t$ under the projection to $|\cT|$
(such a point $t'$ exists,
by~\cite[\href{http://stacks.math.columbia.edu/tag/04XH}{Lem.~04XH}]
{stacks-project}); then
$$\dim_t(\cT_x) := \dim_{t'}(\cT \times_{\cX} \Spec k ).$$
(Note that since $\cT$ is an algebraic stack and $\cX$ is an algebraic
stack, 
the fibre product $\cT\times_{\cX} \Spec k$ is an algebraic stack,
which is locally Noetherian 
by~\cite[\href{http://stacks.math.columbia.edu/tag/06R6}{Lem.~06R6}]{stacks-project}.
Thus the quantity on the right side of this proposed definition
is defined by Definition~\ref{def:dimension for stacks}.)
\end{adf}

\begin{aremark}
Standard manipulations show that $\dim_t(\cT_x)$ is well-defined,
independently of the choices made to compute it.
\end{aremark}

\bigskip

We now establish some basic properties of relative dimension, which
are obvious generalisations of the corresponding statements in the 
case of morphisms of schemes.

\begin{alemma}
\label{lem:base-change invariance of relative dimension}
Suppose given 
a Cartesian square of morphisms of locally Noetherian stacks
$$\xymatrix{\cT' \ar[d]\ar[r] & \cT \ar[d] \\
\cX' \ar[r] & \cX}$$
in which the vertical morphisms are locally of finite type.
If $t' \in |\cT'|$, 
with images $t$, $x'$, and $x$ in $|\cT|$, $|\cX'|$, and $|\cX|$ 
respectively, then $\dim_{t'}(\cT'_{x'}) = \dim_{t}(\cT_x).$
\end{alemma}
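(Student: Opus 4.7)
The plan is to reduce both sides of the claimed equality to the dimension of a single algebraic stack at a single point, by making compatible choices in the definition of relative dimension. The key observation is that, in Definition~\ref{def:relative dimension for stacks}, we have freedom to pick the field-valued representative of the relevant point.

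First I would choose a morphism $\Spec k' \to \cX'$ representing the point $x' \in |\cX'|$ (this is used to compute $\dim_{t'}(\cT'_{x'})$). Since $x$ is by definition the image of $x'$ under $|\cX'| \to |\cX|$, the composite $\Spec k' \to \cX' \to \cX$ is a morphism whose image in $|\cX|$ is $x$, so I may take this composite as the representative of $x$ used to compute $\dim_t(\cT_x)$. With these compatible choices, the Cartesian property of the square yields a canonical isomorphism of algebraic stacks
$$\cT' \times_{\cX'} \Spec k' \;\cong\; (\cT \times_{\cX} \cX') \times_{\cX'} \Spec k' \;\cong\; \cT \times_{\cX} \Spec k'.$$

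Next, I would choose a point $t'' \in |\cT' \times_{\cX'} \Spec k'|$ mapping to $t'$ under projection to $|\cT'|$ (such a point exists by~\cite[\href{http://stacks.math.columbia.edu/tag/04XH}{Lem.~04XH}]{stacks-project}, as in the definition). Under the canonical isomorphism above, this same point $t''$ may be regarded as a point of $|\cT \times_{\cX} \Spec k'|$, and its image in $|\cT|$ is the image of $t'$ under $|\cT'| \to |\cT|$, which is exactly $t$. Hence $t''$ is an admissible choice of point for computing $\dim_t(\cT_x)$ as well.

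Applying Definition~\ref{def:relative dimension for stacks} to both sides with these choices, both $\dim_{t'}(\cT'_{x'})$ and $\dim_t(\cT_x)$ are equal to $\dim_{t''}$ of the same algebraic stack $\cT \times_{\cX} \Spec k' \cong \cT' \times_{\cX'} \Spec k'$, and the claimed equality follows. There is no genuine obstacle here: the lemma is essentially an unwinding of the definitions, made possible by the well-definedness of relative dimension noted in the preceding remark. The only step that requires any thought is the initial one, namely to observe that a representative of $x'$ automatically supplies a compatible representative of $x$ via composition along $\cX' \to \cX$.
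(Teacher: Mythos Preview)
Your proof is correct and follows exactly the same idea as the paper's proof, which is the one-line observation that both sides can, by definition, be computed as the dimension of the same fibre product at the same point. You have simply spelled out in detail the compatible choices (of field-valued representative and of lifted point) that justify this sentence.
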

\begin{proof}Both sides can (by definition) be computed as the
  dimension of the same fibre product.
\end{proof}

\begin{alemma}
\label{lem:behaviour of dimensions w.r.t. smooth morphisms; stacky case}
If $f: \cU \to \cX$ is a smooth morphism of locally Noetherian algebraic
stacks, and
if $u \in |\cU|$ with image $x \in |\cX|$,
then $$\dim_u (\cU) = \dim_x(\cX) + \dim_{u} (\cU_x).$$
\end{alemma}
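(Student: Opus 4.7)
The plan is to reduce the statement to Lemma~\ref{lem:behaviour of dimensions w.r.t. smooth morphisms} (the algebraic space version) by choosing a single smooth chart that computes all three dimensions simultaneously. More precisely, choose a smooth surjection $V \to \cU$ from a scheme (or algebraic space) $V$, and pick $v \in |V|$ mapping to $u$. Since $\cU \to \cX$ is smooth, so is the composite $V \to \cU \to \cX$, which means $V$ is also a smooth chart for $\cX$ at $x$. Accordingly, Definition~\ref{def:dimension for stacks} gives
\[
\dim_u(\cU) = \dim_v(V) - \dim_v(V_u), \qquad \dim_x(\cX) = \dim_v(V) - \dim_v(V_x),
\]
where $V_u := V \times_\cU \Spec k'$ and $V_x := V \times_\cX \Spec k$ for appropriate field-valued representatives of $u$ and $x$, and the dimensions on the right are computed at points of these fibre products lying over $v$.

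Next, I would compute $\dim_u(\cU_x)$ via the same chart. The base change $V_x \to \cU_x := \cU \times_\cX \Spec k$ of the smooth morphism $V \to \cU$ is smooth, and has $V_x$ a scheme (hence an algebraic space), so it serves as a smooth chart for the locally Noetherian algebraic stack $\cU_x$. A point $\tilde v \in |V_x|$ lifting $v$ (which exists since $v$ maps to $x$) gives a point $\tilde u \in |\cU_x|$ lifting $u$, and by Remark~\ref{rem:relative dimension}~(1) this is a legitimate choice in Definition~\ref{def:relative dimension for stacks}. Then by Definition~\ref{def:dimension for stacks} applied to $V_x \to \cU_x$,
\[
\dim_u(\cU_x) = \dim_{\tilde v}(V_x) - \dim_{\tilde v}\bigl((V_x)_{\tilde u}\bigr).
\]

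The key identification is $(V_x)_{\tilde u} \cong V_u$: for any field-valued representative $\Spec k'' \to \cU_x$ of $\tilde u$, one has
\[
V_x \times_{\cU_x} \Spec k'' \;\cong\; V \times_\cU \Spec k'' \;=\; V_u,
\]
using the natural projection $\cU_x \to \cU$ and the fact that $\Spec k'' \to \cU_x \to \cU$ represents $u$. Under this identification, the point of $(V_x)_{\tilde u}$ lying over $\tilde v$ corresponds to a point of $V_u$ lying over $v$, so $\dim_{\tilde v}((V_x)_{\tilde u}) = \dim_v(V_u)$, and similarly $\dim_{\tilde v}(V_x) = \dim_v(V_x)$ (both are allowed choices in the definitions).

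Substituting, I obtain
\[
\dim_x(\cX) + \dim_u(\cU_x) = \bigl(\dim_v(V) - \dim_v(V_x)\bigr) + \bigl(\dim_v(V_x) - \dim_v(V_u)\bigr) = \dim_v(V) - \dim_v(V_u) = \dim_u(\cU),
\]
as desired. There is no real obstacle beyond the bookkeeping of choices of representatives; the work is essentially all packed into verifying $(V_x)_{\tilde u} \cong V_u$ and checking that the various lifts of $v$ can be made consistently, both of which are immediate from the associativity of fibre products and the invariance statements of Remark~\ref{rem:relative dimension}~(1).
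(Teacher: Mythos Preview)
Your proof is correct and follows essentially the same approach as the paper: choose a smooth chart $V \to \cU$, use the composite $V \to \cX$ as a chart for $\cX$, and reduce everything to the identification $(V_x)_{\tilde u} \cong V_u$ via associativity of fibre products (the paper phrases this last step as an application of Lemma~\ref{lem:base-change invariance of relative dimension}). One small correction: $V_x = V \times_{\cX} \Spec k$ is in general only an algebraic space, not a scheme, since $\cX$ is an algebraic stack rather than an algebraic space; this does not affect your argument, as Definition~\ref{def:dimension for stacks} allows the chart to be an algebraic space.
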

\begin{proof}
Choose a smooth surjective morphism $V \to \cU$ whose source
is a scheme, and let $v\in |V|$ be a point mapping to $u$.
Then the composite $V \to \cU \to \cX$ is also smooth,
and by definition we have $\dim_x(\cX) = \dim_v(V) - \dim_v(V_x),$
while $\dim_u(\cU) = \dim_v(V) - \dim_v(V_u).$
Thus
$$\dim_u(\cU) - \dim_x(\cX) = \dim_v (V_x) - \dim_v (V_u).$$

Choose a representative $\Spec k \to \cX$ of $x$
and choose a point $v' \in | V \times_{\cX} \Spec k|$ lying over
$v$, with image $u'$ in $|\cU\times_{\cX} \Spec k|$;
then by definition
$\dim_u(\cU_x) = \dim_{u'}(\cU\times_{\cX} \Spec k),$
and
$\dim_v(V_x) = \dim_{v'}(V\times_{\cX} \Spec k).$

Now $V\times_{\cX} \Spec k \to \cU\times_{\cX}\Spec k$
is a smooth surjective morphism (being the base-change 
of such a morphism) whose source is an algebraic space
(since $V$ and $\Spec k$ are schemes, and $\cX$
is an algebraic stack).  Thus, again by definition,
we have
\begin{multline*}
\dim_{u'}(\cU\times_{\cX} \Spec k) =
\dim_{v'}(V\times_{\cX} \Spec k) -
\dim_{v'}\bigl( (V\times_{\cX} \Spec k)_{u'})
\\
= \dim_v(V_x) - 
\dim_{v'}\bigl( (V\times_{\cX} \Spec k)_{u'}).
\end{multline*}
Now $V\times_{\cX} \Spec k \iso V\times_{\cU} (\cU\times_{\cX} \Spec k),$
and so
Lemma~\ref{lem:base-change invariance of relative dimension}
shows that
$\dim_{v'}\bigl( (V\times_{\cX} \Spec k)_{u'})  = \dim_v(V_u).$
Putting everything together, we find that
$$
\dim_u(\cU) - \dim_x(\cX) = 
\dim_u(\cU_x),$$
as required.
\end{proof}

\begin{alemma}
\label{lem:relative dimension is semi-continuous}
Let $f: \cT \to \cX$ be a locally of finite type morphism of algebraic stacks.
\begin{enumerate}
\item
The function $t \mapsto \dim_t(\cT_{f(t)})$ is upper semi-continuous 
on~$|\cT|$.
\item If $f$ is smooth, then
the function $t \mapsto \dim_t(\cT_{f(t)})$ is locally constant
on~$|\cT|$.
\end{enumerate}
\end{alemma}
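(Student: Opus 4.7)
The plan is to reduce both assertions to the classical scheme-theoretic facts --- upper semi-continuity of fibre dimension, and its local constancy for smooth morphisms --- by pulling back along a smooth chart of $\cT$.

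First, choose a smooth surjection $p\colon U\to\cT$ with $U$ a scheme. The induced map $|U|\to|\cT|$ is continuous, open, and surjective (by~\cite[\href{http://stacks.math.columbia.edu/tag/04XL}{Tag 04XL}]{stacks-project}), so the upper semi-continuity (respectively, local constancy) of a function on $|\cT|$ is equivalent to the same property for its pullback to $|U|$. The crucial identity will be
\[
\dim_t(\cT_{f(t)}) \;=\; \dim_u(U_{f(t)}) \;-\; \dim_u(U_t)
\]
for $u\in|U|$ with image $t=p(u)$, where $U_{f(t)}$ denotes the relative dimension at $u$ of the composite $U\to\cT\to\cX$ and $U_t$ that of the smooth morphism $p$, both in the sense of Definition~\ref{def:relative dimension}. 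To verify the identity, pick a representative $\Spec k\to\cX$ of $x:=f(t)$: base-changing $p$ gives a smooth surjection $U_x:=U\times_\cX\Spec k \to \cT\times_\cX\Spec k=:\cT_x$, and Lemma~\ref{lem:behaviour of dimensions w.r.t. smooth morphisms; stacky case} applied at a lift $u'\in|U_x|$ of $u$ with image $t'\in|\cT_x|$ yields
\[
\dim_{u'}(U_x) \;=\; \dim_{t'}(\cT_x) \;+\; \dim_{u'}((U_x)_{t'}),
\]
in which the last term equals $\dim_u(U_t)$ by Lemma~\ref{lem:base-change invariance of relative dimension} applied to the Cartesian square with horizontal sides $U_x\to U$ and $\cT_x\to\cT$.

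It then remains to control the two terms on the right-hand side. Choose smooth surjections $V\to\cX$ and $V'\to\cT$ from schemes: by Lemma~\ref{lem:base-change invariance of relative dimension}, $u\mapsto\dim_u(U_{f(t)})$ pulls back under the smooth surjection $U\times_\cX V\to U$ to the fibre-dimension function of the morphism of algebraic spaces $U\times_\cX V\to V$, which is locally of finite type, and smooth if $f$ is (since $p$ is smooth). Classical semi-continuity for morphisms of algebraic spaces makes this function upper semi-continuous on $|U\times_\cX V|$, and locally constant when $f$ is smooth; descending along the continuous, open, surjective map $|U\times_\cX V|\to|U|$ gives the corresponding property on $|U|$. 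The same reasoning applied to the smooth morphism $p$ via the cover $V'\to\cT$ shows that $u\mapsto\dim_u(U_t)$ is locally constant on $|U|$. Subtracting yields an upper semi-continuous pullback of $t\mapsto\dim_t(\cT_{f(t)})$ on $|U|$ in general (the difference of an upper semi-continuous function and a locally constant one), establishing~(1); and when $f$ is smooth both terms are locally constant, giving~(2). The main obstacle will be the bookkeeping around the various base-changes when setting up the key identity, but beyond that each step is a direct application of the lemmas already established.
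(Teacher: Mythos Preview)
Your proof is correct and follows essentially the same approach as the paper's: both establish the identity $\dim_t(\cT_{f(t)}) = \dim_u(U_{f(t)}) - \dim_u(U_t)$ for a smooth cover $U\to\cT$ by a scheme, then handle the two terms separately by covering the target and invoking the algebraic-space results. The only difference is organizational: the paper first isolates the case where $\cT$ itself is a scheme (covering $\cX$ to reduce to a morphism of algebraic spaces) and then reduces the general case to that one, whereas you go straight to general $\cT$ and perform the same reduction inline for each of the two terms.
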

\begin{proof}
	Suppose to begin with that $\cT$ is a scheme $T$,
	let $U \to \cX$ be a smooth surjective morphism whose source
	is a scheme, and let $T':=T\times_{\cX}U$. Let $f': T' \to U$ be
	the pull-back of $f$ over $U$,
	and let $g: T' \to T$ be the projection.

      	Lemma~\ref{lem:base-change
		invariance of relative dimension}
	shows that $\dim_{t'}(T'_{f'(t')}) = \dim_{g(t')}(T_{f(g(t'))}),$
	for $t' \in T'$, while, 
	since $g$ is smooth and surjective (being the base-change
	of a smooth surjective morphism) the map induced by~$g$ on underlying
	topological spaces is continuous and open
(by~\cite[\href{http://stacks.math.columbia.edu/tag/04XL}{Lem.~04XL}]{stacks-project}), and surjective.
        Thus it suffices to note that part~(1) for the morphism $f'$
	follows from~\cite[\href{http://stacks.math.columbia.edu/tag/04NT}{Tag
  04NT}]{stacks-project}, and part~(2)
from either
of~\cite[\href{http://stacks.math.columbia.edu/tag/02NM}{Lem.~02NM}]{stacks-project}
or~\cite[\href{http://stacks.math.columbia.edu/tag/02G1}{Lem.~02G1}]{stacks-project}
(each of which gives the result for schemes, from which
the analogous results for algebraic spaces can
be deduced exactly as in~\cite[\href{http://stacks.math.columbia.edu/tag/04NT}{Tag
  04NT}]{stacks-project}).

	Now return to the general case,
	and choose a smooth surjective morphism
	$h:V \to \cT$ whose source is a scheme.  
	If $v \in V$, then, essentially by definition,
	we have 
	$$\dim_{h(v)}(\cT_{f(h(v))}) = 
	\dim_{v}(V_{f(h(v))}) - \dim_{v}(V_{h(v)}).$$
	Since $V$ is a scheme, we have proved that the first 
	of the terms on the right hand side of this equality
	is upper semi-continuous (and even locally
	constant if $f$ is smooth), while the second term is 
	in fact locally constant. 
	Thus their difference is upper semi-continuous
	(and locally constant if $f$ is smooth),
	and hence the function
	$\dim_{h(v)}(\cT_{f(h(v))})$
        is upper semi-continuous on $|V|$ (and locally
	constant if $f$ is smooth).  	
        Since the morphism $|V| \to |\cT|$ is open and surjective,
	the lemma follows.
\end{proof}

Before continuing with our development,
we prove two lemmas related to the dimension theory of schemes.

To put the first lemma in context,
we note that if $X$ is a finite-dimensional scheme, then since $\dim X$
is 
equal to the supremum of the dimensions $\dim_x X$,
there exists a point $x \in X$ such that $\dim_x X = \dim X$. 
The following lemma shows that we may furthermore take the point
$x$ to be of finite type.

\begin{alemma}
	\label{lem:dimension achieved by finite type point}
	If $X$ is a finite-dimensional scheme,
	then there exists a closed {\em (}and hence
       	finite type{\em )}  point $x \in X$
	such that $\dim_x X = \dim X$.
\end{alemma}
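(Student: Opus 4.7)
My plan is to produce a closed point by looking at the bottom of a maximal length chain of irreducible closed subsets. Set $n := \dim X$, which by hypothesis is finite, so by definition of Krull dimension we may choose a chain $Z_0 \subsetneq Z_1 \subsetneq \cdots \subsetneq Z_n$ of irreducible closed subsets of $X$ realising this length. The chain is maximal, so in particular $Z_0$ admits no proper non-empty irreducible closed subset.

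The key observation is then that $Z_0$ must be a single closed point. Indeed, schemes are sober, so $Z_0$ has a unique generic point $\eta$. For any $y \in Z_0$, the closure $\overline{\{y\}}\cap Z_0 = \overline{\{y\}}$ is a non-empty irreducible closed subset of $Z_0$, hence (by maximality of the chain) equals $Z_0$; thus $y$ is also a generic point of $Z_0$, and by uniqueness $y=\eta$. So $Z_0=\{\eta\}$, and this is closed in $X$, making $\eta$ a closed point. (The parenthetical in the statement follows because any closed point of a scheme is finite type: choose an affine open neighbourhood $U = \Spec A$, in which $\eta$ corresponds to a maximal ideal, so that $\Spec \kappa(\eta) \to U \to X$ is a composition of a closed immersion with an open immersion.)

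It remains to check that $\dim_\eta X = n$. For any open $U \subseteq X$ with $\eta \in U$, I claim the traces $Z_i\cap U$ give a chain of irreducible closed subsets of $U$ of length $n$. Each $Z_i \cap U$ is a non-empty (it contains $\eta$) open subset of the irreducible space $Z_i$, hence irreducible; and if $Z_i \cap U = Z_{i+1}\cap U$ then, taking closures in $X$ and using that a non-empty open of an irreducible space is dense, we would get $Z_i = \overline{Z_i\cap U} = \overline{Z_{i+1}\cap U} = Z_{i+1}$, a contradiction. Hence $\dim U \geq n$; taking the infimum over all such $U$ gives $\dim_\eta X \geq n$, and the reverse inequality is immediate from $\dim_\eta X \leq \dim X = n$.

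The only point requiring care is step two (identifying $Z_0$ with a closed point), and this is handled cleanly by sobriety of schemes together with the fact that the chosen chain cannot be extended downward. Everything else is bookkeeping with the definition of $\dim_x X$ as the infimum of $\dim U$ over open neighbourhoods.
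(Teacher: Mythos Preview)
Your proof is correct and follows essentially the same approach as the paper's: choose a chain of irreducible closed subsets of maximal length, use sobriety to identify the minimal term with a single closed point, and then intersect the chain with an arbitrary open neighbourhood to bound its dimension from below. The only differences are cosmetic (indexing direction, and your slightly more explicit justifications for why the bottom term is a singleton and why the intersected chain remains strict).
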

\begin{proof}
	Let $d = \dim X$,
	and choose a maximal strictly decreasing
	chain of irreducible closed subsets of $X$,
	say
	\anumequation
	\label{eqn:maximal chain}
	Z_0 \supset Z_1 \supset \cdots \supset Z_d.
\end{equation}
        The subset $Z_d$ is a minimal irreducible closed subset of $X$,
	and thus any point of $Z_d$ is a generic point of $Z_d$.
	Since the underlying topological space of the scheme $X$ is sober,
	we conclude that $Z_d$ is a singleton, consisting of a single 
	closed point $x \in X$.
%
If $U$ is 
	any neighbourhood of $x$, then
	the chain 
	$$
	U\cap Z_0 \supset U\cap Z_1 \supset \cdots \supset U\cap Z_d = Z_d =
	\{x\}
	$$
	is then a strictly descending chain of irreducible
	closed subsets of $U$, showing that $\dim U \geq d$.
	Thus we find that $\dim_x X \geq d$.  The other inequality
	being obvious, the lemma is proved.
\end{proof}

The next lemma shows that $\dim_x X$ is a {\em constant} function
on an irreducible scheme satisfying some mild additional hypotheses.
(See Lemma~\ref{lem:equicodimensionality of opens} below
for a related result.)

\begin{alemma}
	\label{lem:constancy of dimension}
	If $X$ is an irreducible, Jacobson, catenary, and locally Noetherian
	scheme of finite dimension,
	then $\dim U = \dim X$ for every
	non-empty open subset $U$ of $X$.
	Equivalently, $\dim_x X$ is a constant function on $X$.
\end{alemma}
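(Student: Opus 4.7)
The inequality $\dim U \leq \dim X$ is immediate: any strict descending chain of irreducible closed subsets of $U$, upon taking closures in $X$, yields a strict descending chain of irreducible closed subsets of $X$, since $\overline{Y} \cap U = Y$ for irreducible closed $Y \subseteq U$. The equivalent formulation in terms of $\dim_x X$ follows at once: since $\dim_x X = \inf_{V \ni x} \dim V$, constancy at the value $d := \dim X$ is tantamount to every non-empty open having dimension $d$.

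For the reverse inequality, let $U$ be a non-empty open of $X$. Since $X$ is Jacobson, its closed points are dense, so $U$ contains a closed point $u$ of $X$. The definition of $\dim_u$ as an infimum over open neighbourhoods, combined with the fact that open neighbourhoods of $u$ in $U$ are precisely those open neighbourhoods in $X$ which are contained in $U$, easily yields $\dim_u U = \dim_u X$, whence $\dim U \geq \dim_u U = \dim_u X$. So it suffices to show $\dim_u X = d$ for every closed point $u$ of $X$. For such $u$ one has the identification $\dim_u X = \dim \mathcal{O}_{X,u}$: any chain of primes in the local ring corresponds to a chain of generalisations $\eta_0 \rightsquigarrow \cdots \rightsquigarrow \eta_k = u$, each of which lies in every open neighbourhood of $u$, giving $\dim_u X \geq \dim \mathcal{O}_{X,u}$; the reverse inequality holds by passing to a sufficiently small affine neighbourhood. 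We are thus reduced to showing: in a Jacobson, Noetherian, catenary domain $A$ of dimension $d$, every maximal ideal has height $d$.

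I would prove this reduction by induction on $d$. The base case $d=0$ is trivial. For $d \geq 1$ and $\mathfrak{m}$ a maximal ideal of $A$, Krull's Hauptidealsatz produces a height-one prime $\mathfrak{p} \subset \mathfrak{m}$. The quotient $A/\mathfrak{p}$ is a Jacobson, Noetherian, catenary domain of dimension at most $d-1$, in which $\mathfrak{m}/\mathfrak{p}$ is maximal. By the inductive hypothesis, $\operatorname{ht}(\mathfrak{m}/\mathfrak{p}) = \dim A/\mathfrak{p}$, and catenarity of $A$ gives
\[ \operatorname{ht}(\mathfrak{m}) \;=\; \operatorname{ht}(\mathfrak{p}) + \operatorname{ht}(\mathfrak{m}/\mathfrak{p}) \;=\; 1 + \dim A/\mathfrak{p}. \]
To conclude $\operatorname{ht}(\mathfrak{m}) = d$, one must choose $\mathfrak{p}$ so that $\dim A/\mathfrak{p} = d-1$; catenarity alone only gives $\dim A/\mathfrak{p} \leq d-1$.

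This last point is the main technical obstacle. It amounts to producing a maximal chain of primes in $A$ of length $d$ passing through $\mathfrak{m}$, and is established by combining Lemma~\ref{lem:dimension achieved by finite type point} (which produces \emph{some} maximal chain of length $d$) with the Jacobson hypothesis (which, via density of closed points, allows one to move the minimal term of such a chain to any prescribed maximal ideal).
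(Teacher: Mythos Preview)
Your argument has a genuine gap. The claimed identity $\dim_u X = \dim \mathcal O_{X,u}$ for closed points $u$ is not valid under the hypotheses of the lemma. The inequality $\dim_u X \geq \dim \mathcal O_{X,u}$ is fine, but the reverse direction does \emph{not} follow by ``passing to a sufficiently small affine neighbourhood'': for any affine open $\Spec A \ni u$ one has $\dim A = \sup_{\mathfrak n} \operatorname{ht}(\mathfrak n)$ over all maximal ideals $\mathfrak n$ of $A$, and there is no mechanism for shrinking so as to exclude all maximal ideals of height larger than $\operatorname{ht}(\mathfrak m_u)$.

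What you then set out to prove --- that in a Jacobson, Noetherian, catenary domain of dimension $d$ every maximal ideal has height $d$ --- is precisely the statement that $X$ is equicodimensional, and this is \emph{strictly stronger} than the lemma and in fact false. The paper itself, in the discussion preceding Lemma~\ref{lem:equicodimensionality of opens}, points to \cite[(10.7.3)]{MR0217086} for a Jacobson, universally catenary, integral, Noetherian scheme which is \emph{not} equicodimensional; the present lemma nonetheless applies to it, so for some closed point $u$ one has $\dim_u X = d > \dim \mathcal O_{X,u}$. Your closing hand-wave --- moving a maximal chain of length $d$ to pass through an arbitrary prescribed maximal ideal via density of closed points --- is therefore attempting something that cannot be done.

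The paper's proof sidesteps this by proving only what is actually needed: not that \emph{every} closed point has height $d$, but that every open $U$ contains \emph{some} chain of length $d$. It fixes one maximal chain $X = Z_0 \supset \cdots \supset Z_d = \{x\}$; if $x \in U$, intersecting with $U$ finishes. If $x \notin U$, it passes to $\Spec \mathcal O_{X,x}$, uses that the punctured spectrum is Jacobson to find a closed point $z$ therein lying over $U$, takes $Z$ to be the closure of its image in $X$ (so $\{x\} \subsetneq Z$ with nothing strictly between, and $Z \cap U \neq \emptyset$), and then uses catenarity to refine $X \supseteq Z \supset \{x\}$ to a chain of length $d$ whose top $d$ terms all meet $U$. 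Together with a closed point of $U \cap Z$, this yields the required chain inside $U$.
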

\begin{proof}
	The equivalence of the two claims follows directly from the
	definitions.   Suppose, then, that $U\subset X$ is a non-empty open
	subset.
	Certainly $\dim U \leq \dim X$, and we have to show
	that $\dim U \geq \dim X.$
	Write $d := \dim X$, and choose a maximal strictly
	decreasing chain of irreducible closed subsets
	of $X$, say
	$$X = Z_0 \supset Z_1 \supset \dots \supset Z_d.$$
	As noted in the proof of Lemma~\ref{lem:dimension achieved by finite type point},
	the minimal irreducible closed
	subset $Z_d$ is equal to $\{x\}$ for some closed 
	point $x$. 

	If $x \in U,$ then
	$$U = U \cap Z_0  \supset U\cap Z_1 \supset \dots \supset
	U\cap Z_d = \{x\}$$
	is a strictly decreasing chain of irreducible closed 
	subsets of $U$, and so we conclude that $\dim U \geq d$,
	as required.  Thus we may suppose that $x \not\in U.$


	Consider the flat morphism $\Spec \cO_{X,x} \to X$.
	The non-empty (and hence dense) open subset $U$ of $X$
	pulls back to an open subset $V \subset \Spec \cO_{X,x}$.
	Replacing $U$ by a non-empty quasi-compact, and hence
	Noetherian, open subset, we may assume that the inclusion
	$U \to X$ is a quasi-compact morphism.  Since the
	formation of scheme-theoretic images of quasi-compact
	morphisms commutes with flat
	base-change~\cite[\href{http://stacks.math.columbia.edu/tag/081I}{Tag
                 081I}]{stacks-project}), 
	we see that $V$ is dense in $\Spec \cO_{X,x}$,
	and so in particular non-empty,
	and of course $x \not\in V.$  (Here we use $x$ also to denote
	the closed point of $\Spec \cO_{X,x}$, since its image
	is equal to the given point $x \in X$.)
	Now $\Spec \cO_{X,x} \setminus \{x\}$ is 
	Jacobson~\cite[\href{http://stacks.math.columbia.edu/tag/02IM}{Tag
                 02IM}]{stacks-project}, 
	and hence $V$ contains a closed point $z$ 
	of $\Spec \cO_{X,x} \setminus \{x\}$.  The closure
	in $X$ of the image of $z$ is then an irreducible
	closed subset $Z$ of $X$ containing $x$, whose intersection
	with $U$ is non-empty, and
       	for which there is no irreducible closed 
	subset properly contained in $Z$
	and properly containing $\{x\}$
        (because pull-back to $\Spec \cO_{X,x}$ induces
	a bijection between irreducible closed subsets of $X$
	containing $x$ and irreducible closed subsets of $\Spec
	\cO_{X,x}$).
	Since $U \cap Z$ is a non-empty closed subset of $U$,
	it contains a point $u$ that is closed in $X$ (since
	$X$ is Jacobson), and since $U\cap Z$ 
	is a non-empty (and hence dense) open subset of the irreducible set $Z$
	(which contains a point not lying in $U$, namely $x$),
	the inclusion $\{u\} \subset U\cap Z$ is proper.

	As $X$ is catenary, the chain
	$$X = Z_0 \supseteq Z \supset \{x\} = Z_d $$
	can be refined to a chain of length $d+1$, which must then
	be of the form
	$$X = Z_0 \supset W_1 \supset \cdots \supset W_{d-1} = Z \supset \{x\} = Z_d.$$
	Since $U\cap Z$ is non-empty, we then find that
	$$U = U \cap Z_0 \supset U \cap W_1\supset \dots \supset U\cap W_{d-1}
	= U\cap Z \supset \{u\}$$
	is a strictly decreasing chain of irreducible closed subsets
	of $U$ of length $d+1$, showing that $\dim U \geq d$,
	as required.
\end{proof}

We will prove a stack-theoretic analogue
of Lemma~\ref{lem:constancy of dimension} in Lemma~\ref{lem:irreducible
	implies equidimensional} below,
but before doing so, we have to introduce an additional definition,
necessitated by the fact that the notion of a scheme being catenary
is not an \'etale local one
(see
the example
of~\cite[\href{http://stacks.math.columbia.edu/tag/0355}{Tag 0355}]{stacks-project}),
which makes it difficult to define what it means for an algebraic
space or algebraic stack to be catenary
(see the discussion of \cite[p.~3]{MR3351957}).
For certain aspects of dimension theory, the following 
definition seems to provide a good substitute for the missing 
notion of a catenary algebraic stack.

\begin{adf}
	We say that a locally Noetherian algebraic stack $\cX$ 
	is {\em pseudo-catenary} if there exists a smooth
	and surjective morphism $U \to \cX$ whose source is
	a universally catenary scheme.
\end{adf}

\begin{aexample}
	If $\cX$ is locally of finite type over a universally
	catenary locally Noetherian scheme $S$,
	and $U\to \cX$ is a smooth surjective morphism
	whose source is a scheme, then the composite 
	$U \to \cX \to S$ is locally of finite type,
	and so $U$ is universally
	catenary~\cite[\href{http://stacks.math.columbia.edu/tag/02J9}{Tag 02J9}]{stacks-project}.   Thus $\cX$ is pseudo-catenary.
\end{aexample}

The following lemma shows that the property of being pseudo-catenary
passes through finite type morphisms.

\begin{alemma}
	\label{lem:catenary covers}
	If $\cX$ is a pseudo-catenary locally Noetherian algebraic
	stack, and if $\cY \to \cX$ is a locally of finite type morphism,
	then there exists a smooth surjective morphism $V \to \cY$
	whose source is a universally catenary scheme; thus
	$\cY$ is again pseudo-catenary.
\end{alemma}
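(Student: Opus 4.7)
The plan is to pull back a universally catenary atlas of $\cX$ to $\cY$ and then descend to a scheme atlas. By the pseudo-catenary assumption on $\cX$, fix a smooth surjective morphism $U \to \cX$ whose source is a universally catenary scheme. Form the $2$-fibre product $\cV := \cY \times_\cX U$, which is an algebraic stack. The projection $\cV \to \cY$ is the base-change of $U \to \cX$, so it is smooth and surjective; the other projection $\cV \to U$ is the base-change of $\cY \to \cX$, hence locally of finite type.

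Next, since $\cV$ is an algebraic stack, choose a smooth surjective morphism $V \to \cV$ whose source is a scheme. The composite $V \to \cV \to \cY$ is then a smooth surjective morphism from a scheme onto $\cY$, and it will furnish the required atlas once we verify that $V$ itself is universally catenary.

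For this, observe that the composite $V \to \cV \to U$ is a composition of a smooth morphism with a locally of finite type morphism, and so is locally of finite type. Since $U$ is universally catenary, and universal catenariness of schemes is preserved under locally of finite type morphisms (\cite[\href{http://stacks.math.columbia.edu/tag/02J9}{Tag~02J9}]{stacks-project}), it follows that $V$ is universally catenary, as required.

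There is no substantial obstacle in the argument; the only point worth flagging is that $\cV$ is a priori merely a stack rather than a scheme, so one cannot use it directly and must pass to a further scheme atlas $V$ of $\cV$. The whole cascade of properties (smooth, surjective, locally of finite type, universally catenary) is then preserved by routine base-change and composition stability.
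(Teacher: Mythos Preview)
Your proof is correct and essentially identical to the paper's own argument: both pull back the universally catenary atlas $U \to \cX$ to the fibre product with $\cY$, pass to a scheme atlas $V$ of that fibre product, and then invoke \cite[\href{http://stacks.math.columbia.edu/tag/02J9}{Tag~02J9}]{stacks-project} to conclude that $V$ is universally catenary from the locally of finite type composite $V \to U$. The only difference is notational (you write $\cY \times_{\cX} U$ rather than $U \times_{\cX} \cY$).
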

\begin{proof}
	By assumption we may find a smooth surjective morphism
	$U \to \cX$ whose source is a universally catenary scheme.
	The base-change $U\times_{\cX} \cY$ is then an algebraic
	stack; let $V \to U\times_{\cX} \cY$ be a smooth
	surjective morphism whose source is a scheme.  
	The composite $V \to U\times_{\cX} \cY \to \cY$ is then
	smooth and surjective (being a composite of smooth and
	surjective morphisms), while the morphism $V \to U\times_{\cX}
	\cY \to U$ is locally of finite type (being a composite 
	of morphisms that are locally of finite type).  Since $U$
	is universally catenary, we see that $V$ is universally catenary
(by~\cite[\href{http://stacks.math.columbia.edu/tag/02J9}{Tag 02J9}]{stacks-project}),
	as claimed.
\end{proof}

We now study the behaviour of the function $\dim_x(\cX)$ on $|\cX|$
(for some locally Noetherian stack $\cX$) with respect to the irreducible
components of $|\cX|$, as well as various
related topics.

\begin{alemma}
\label{lem:irreducible implies equidimensional}
If $\cX$ is
a Jacobson, pseudo-catenary, and locally Noetherian  algebraic stack
for which $|\cX|$ is irreducible,
then $\dim_x(\cX)$ is a constant function on~$|\cX|$.
\end{alemma}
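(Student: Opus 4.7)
The plan is to reduce the problem to the scheme-theoretic analogue Lemma~\ref{lem:constancy of dimension} by working with a well-chosen smooth presentation. Using the pseudo-catenary hypothesis, I would first choose a smooth surjective morphism $f : U \to \cX$ whose source is a universally catenary scheme; since the assertion is local on $|\cX|$ and any non-empty open subset of an irreducible space remains irreducible, I may shrink to a quasi-compact open of $|\cX|$ and thus assume $U$ is Noetherian. Moreover, $U$ inherits the Jacobson property from $\cX$ because $|U|\to|\cX|$ is open and surjective, so the closure-of-closed-points characterisation transfers.

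Three ingredients then drive the argument. First, since $|\cX|$ is irreducible, Lemma~\ref{lem:map of components} forces every irreducible component $T'$ of $|U|$ to have dense image in $|\cX|$. Second, each such $T'$, equipped with its reduced induced subscheme structure, is irreducible, Jacobson (closed subschemes of Jacobson schemes are Jacobson), catenary (closed subschemes of universally catenary schemes are catenary), and locally Noetherian of finite dimension; Lemma~\ref{lem:constancy of dimension} therefore gives that $\dim_u T' = \dim T'$ for every $u \in T'$, whence $\dim_u(U) = \max_{T' \ni u}\dim T'$. Third, Lemma~\ref{lem:relative dimension is semi-continuous}(2) applied to the smooth morphism $f$ shows that $d(u):=\dim_u(U_{f(u)})$ is locally constant on $|U|$, and hence constant on each irreducible component $T'$ (which is connected) with value, say, $d_{T'}$.

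The core step is to show that $\delta := \dim T' - d_{T'}$ is the same integer for every irreducible component $T'$ of $U$. Given two components $T'_1$ and $T'_2$, let $W_{T'_i} \subseteq T'_i$ be the open dense locus of points lying on no other component of $U$ (non-empty, and open in $|U|$ because $U$ is Noetherian). Since $f$ is smooth hence open, each $f(W_{T'_i})$ is a non-empty open subset of the irreducible space $|\cX|$, so the intersection $f(W_{T'_1}) \cap f(W_{T'_2})$ is non-empty; pick $x$ in this intersection together with points $u_i \in W_{T'_i}$ satisfying $f(u_i) = x$. As $u_i$ lies on a unique irreducible component of $U$, we have $\dim_{u_i}(U) = \dim T'_i$, and the intrinsic well-definedness of $\dim_x(\cX)$ built into Definition~\ref{def:dimension for stacks} (underpinned by Lemma~\ref{lem:behaviour of dimensions w.r.t. smooth morphisms}) yields
\[ \dim T'_1 - d_{T'_1} = \dim_{u_1}(U) - d(u_1) = \dim_x(\cX) = \dim_{u_2}(U) - d(u_2) = \dim T'_2 - d_{T'_2}. \]
For an arbitrary $u \in U$, every irreducible component $T' \ni u$ then satisfies $\dim T' = \delta + d_{T'} = \delta + d(u)$, so $\dim_u(U) = \delta + d(u)$ and therefore $\dim_{f(u)}(\cX) = \dim_u(U) - d(u) = \delta$, exhibiting $\dim_x(\cX)$ as the constant function $\delta$ on $|\cX|$.

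The main obstacle will be the cross-component comparison in the core step: the Krull dimensions of distinct irreducible components of $U$ may differ, and so may the relative dimensions of the fibres of $f$ over different points of $|\cX|$, so the only tool coupling these differences is the intrinsic character of $\dim_x(\cX)$ applied at a common point of $|\cX|$. It is precisely the irreducibility of $|\cX|$ that guarantees any two dense open subsets, such as the $f(W_{T'_i})$, must meet and thus furnish such a common point.
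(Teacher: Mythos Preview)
Your argument is correct and follows essentially the same route as the paper's: both pass to a universally catenary smooth cover, invoke Lemma~\ref{lem:constancy of dimension} on each irreducible component together with Lemma~\ref{lem:relative dimension is semi-continuous}(2), and then compare components by producing a common image point in the irreducible space $|\cX|$ (the paper packages this as $\dim T_i = \dim T_j$ for intersecting components via connectedness of $T_i\cup T_j$, whereas you extract the invariant $\dim T' - d_{T'}$ directly, which is marginally cleaner). One small technical slip: shrinking $|\cX|$ to a quasi-compact open does not by itself force $U$ to be Noetherian, since the preimage need not be quasi-compact; the paper handles this by covering $U$ by quasi-compact opens and working on each, and you should do likewise (or, after shrinking $\cX$, further replace $U$ by a quasi-compact open that still surjects onto $\cX$).
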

\begin{proof}
It suffices to show that $\dim_x(\cX)$ is locally constant on $|\cX|$,
since it will then necessarily be constant (as $|\cX|$ is connected,
being irreducible).  Since $\cX$ is pseudo-catenary,
we may find a smooth surjective morphism $U \to \cX$ with $U$ 
being a univesally catenary scheme.  If $\{U_i\}$ is an 
cover of $U$ by quasi-compact open subschemes, we may replace 
$U$ by $\coprod U_i,$, and 
it suffices to show that
the function $u \mapsto \dim_{f(u)}(\cX)$ is locally constant on $U_i$.
Since we check this for one $U_i$ at a time, we now drop the subscript,
and write simply $U$ rather than~$U_i$.
Since $U$ is quasi-compact, it
is the union of a finite number of irreducible components,
say $T_1 \cup \cdots \cup T_n$.  Note that each $T_i$ is Jacobson,
catenary, and locally Noetherian,
being a closed subscheme of the Jacobson, catenary, and locally Noetherian
scheme~$U$.

By definition, we have $\dim_{f(u)}(\cX) = \dim_{u}(U) - \dim_{u}(U_{f(u)}).$
Lemma~\ref{lem:relative dimension is semi-continuous}~(2)
shows that the second term in the right hand expression is locally
constant 
on $U$, as $f$ is smooth,
and hence we must show that $\dim_u(U)$
is locally constant on $U$.  Since $\dim_u(U)$ is the maximum
of the dimensions $\dim_u T_i$, as $T_i$ ranges over the components
of $U$ containing $u$, it suffices to show
that if a point $u$ lies on two distinct components,
say $T_i$ and $T_j$ (with $i \neq j$),
then $\dim_u T_i = \dim_u T_j$,  
and then to note that $t\mapsto \dim_t T$ is a constant 
function on an irreducible Jacobson,
catenary, and locally Noetherian scheme $T$
(as follows from Lemma~\ref{lem:constancy of dimension}).

Let $V = T_i \setminus \bigl( \bigcup_{i' \neq i} T_{i'}\bigr)$ 
and $W = T_j \setminus \bigl( \bigcup_{i' \neq j} T_{i'}\bigr)$.
Then each of $V$ and $W$ is a non-empty open subset of $U$,
and so each has non-empty open image in $|\cX|$.  As $|\cX|$ is irreducible,
these two non-empty open subsets of $|\cX|$ have a non-empty intersection.
Let $x$ be a point lying in this intersection, and let $v \in V$ and 
$w\in W$ be points mapping to $x$.  
We then find that
$$\dim T_i = \dim V = \dim_v (U) = \dim_x (\cX) + \dim_v (U_x)$$
and similarly that
$$\dim T_j = \dim W = \dim_w (U) = \dim_x (\cX) + \dim_w (U_x).$$
Since $u \mapsto \dim_u (U_{f(u)})$ is locally constant on $U$,
and since $T_i \cup T_j$ is connected (being the union of two irreducible,
hence connected, sets that have non-empty intersection), 
we see that $\dim_v (U_x) = \dim_w(U_x)$, 
and hence, comparing the preceding two equations,
that $\dim T_i = \dim T_j$, as required.
\end{proof}

\begin{alemma}
	\label{lem:closed immersions}
	If $\cZ \hookrightarrow \cX$ is a closed immersion
	of locally Noetherian schemes,
	and if $z \in |\cZ|$ has image $x \in |\cX|$,
	then $\dim_z (\cZ) \leq \dim_x(\cX)$.
\end{alemma}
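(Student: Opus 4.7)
The plan is to reduce the inequality to the analogous (classical) inequality for a closed immersion of locally Noetherian schemes; I read the statement as being about algebraic stacks rather than schemes, as the notation $|\cZ|, |\cX|$ and the preceding development suggest. I first choose a smooth surjective morphism $U\to\cX$ with $U$ a scheme, and form the base-change $V := U\times_\cX\cZ$; since closed immersions are stable under base-change, $V\hookrightarrow U$ is a closed immersion of locally Noetherian schemes, and $V\to\cZ$ is a smooth surjective morphism whose source is a scheme. I then pick a point $v\in |V|$ mapping to $z\in|\cZ|$, and let $u\in |U|$ be its image, which in turn maps to $x\in|\cX|$.

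Next I apply Definition~\ref{def:dimension for stacks} to both $\cZ$ and $\cX$, obtaining
$$\dim_z(\cZ) = \dim_v(V) - \dim_v(V_z), \qquad \dim_x(\cX) = \dim_u(U) - \dim_u(U_x).$$
To compare the two relative-dimension terms, I take any representative $\Spec k\to\cZ$ of $z$, and use the composite $\Spec k\to\cZ\to\cX$ as representative of $x$. The canonical identification
$$V\times_\cZ\Spec k \;=\; (U\times_\cX\cZ)\times_\cZ\Spec k \;=\; U\times_\cX\Spec k$$
allows me to use a single point of this common fibre product lying over $v$ to compute both $\dim_v(V_z)$ and $\dim_u(U_x)$ via Definition~\ref{def:relative dimension}, so these two quantities coincide. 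Subtracting gives
$$\dim_x(\cX) - \dim_z(\cZ) \;=\; \dim_u(U) - \dim_v(V),$$
so it suffices to establish $\dim_v(V) \leq \dim_u(U)$.

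This last inequality, for a closed immersion $V\hookrightarrow U$ of locally Noetherian schemes at corresponding points, is classical and immediate from the definition of $\dim_u(U)$ as an infimum: any open neighbourhood $U'\subseteq U$ of $u$ meets $V$ in an open neighbourhood $U'\cap V$ of $v$, which is a closed subscheme of $U'$, whence $\dim_v(V) \leq \dim(U'\cap V) \leq \dim(U')$; taking the infimum over such $U'$ yields the desired bound. The only substantive point in the argument is the bookkeeping around the fibre identification $\dim_v(V_z)=\dim_u(U_x)$, and this is essentially formal once the representatives of $z$ and $x$ are chosen compatibly, so I do not expect any serious obstacle.
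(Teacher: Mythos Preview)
Your proof is correct and follows essentially the same approach as the paper: choose a smooth cover $U\to\cX$, base-change to get a closed immersion $V\hookrightarrow U$, use the fibre identification to obtain $\dim_v(V_z)=\dim_u(U_x)$ (the paper invokes Lemma~\ref{lem:base-change invariance of relative dimension} for this, while you spell out the identification $V\times_\cZ\Spec k = U\times_\cX\Spec k$ directly), and reduce to the scheme-level inequality $\dim_v(V)\leq\dim_u(U)$. You are also right that the statement is about algebraic stacks despite the word ``schemes'' in the hypothesis; the paper's own proof confirms this.
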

\begin{proof}
	Choose a smooth surjective morphism
	$U\to \cX$ whose source is a scheme;
	the base-changed morphism $V := U\times_{\cX} \cZ \to \cZ$
	is then also smooth and surjective, and the projection
	$V \to U$ is a closed immersion.
	If $v \in |V|$ maps to $z \in |\cZ|$, and
	if we let $u$ denote the image of $v$ in $|U|$,
	then clearly
	$\dim_v(V) \leq \dim_u(U)$,
	while
	$\dim_v (V_z) = \dim_u(U_x)$,
	by Lemma~\ref{lem:base-change invariance of relative dimension}.
	Thus $$\dim_z(\cZ)  = \dim_v(V) - \dim_v(V_z)
	\leq \dim_u(U) - \dim_u(U_x) = \dim_x(\cX),$$
	as claimed.  
\end{proof}

\begin{alemma}
\label{lem:dimension via components}
If $\cX$ is a locally Noetherian algebraic stack, and if $x \in |\cX|$,
then $\dim_x(\cX) = \sup_{\cT} \{ \dim_x(\cT) \} $,
where $\cT$ runs over all the irreducible components 
of $|\cX|$ passing through $x$ {\em (}endowed with their
induced reduced structure{\em )}. 
\end{alemma}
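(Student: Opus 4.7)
The plan is to verify the formula directly from the definition, by reducing it to the analogous (and essentially tautological) statement for schemes via a smooth chart. Concretely, choose a smooth surjective morphism $U \to \cX$ whose source is a scheme, and pick a point $u \in |U|$ mapping to $x$. By definition, $\dim_x(\cX) = \dim_u(U) - \dim_u(U_x)$. For a locally Noetherian scheme one has $\dim_u(U) = \sup_{T' \ni u} \dim_u(T')$, where $T'$ ranges over the irreducible components of $U$ through $u$. By Lemma~\ref{lem:map of components}, each such $T'$ has an image in $|\cX|$ whose closure is an irreducible component of $|\cX|$ through $x$, and every component $\cT$ of $|\cX|$ through $x$ arises in this way (since some $T'$ through $u$ must map into it). So the right-hand side of the desired formula should be computed by grouping the components $T'$ according to the component $\cT$ they dominate.

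For each irreducible component $\cT$ of $|\cX|$ through $x$, endowed with its reduced substack structure, form the base-change $V := U \times_{\cX} \cT$. The morphism $\cT \hookrightarrow \cX$ is a closed immersion, so $V \hookrightarrow U$ is a closed immersion of algebraic spaces, and the morphism $V \to \cT$ is smooth and surjective. Since $\cT$ is reduced and $V \to \cT$ is smooth, $V$ is a reduced closed subscheme of $U$, whose underlying space $|V|$ is exactly the preimage of $|\cT|$ in $|U|$; hence the irreducible components of $V$ are precisely those $T'$ whose image has closure equal to $\cT$. Applying Definition~\ref{def:dimension for stacks} to the smooth cover $V \to \cT$ at the point $u$ (which lies in $|V|$ because $x \in |\cT|$), we get
\[
\dim_x(\cT) = \dim_u(V) - \dim_u(V_x).
\]

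The key observation is that $V_x = U_x$: indeed, choosing a representative $\Spec k \to \cX$ of $x$ that factors through $\cT$ (possible since $x \in |\cT|$), we have
\[
V \times_{\cT} \Spec k \;=\; (U \times_{\cX} \cT) \times_{\cT} \Spec k \;=\; U \times_{\cX} \Spec k,
\]
so $\dim_u(V_x) = \dim_u(U_x)$. Combining this with the scheme-level formula $\dim_u(V) = \sup_{T' \subset V,\, u \in T'} \dim_u(T')$ yields
\[
\dim_x(\cT) \;=\; \sup_{\substack{T' \ni u \\ \overline{\mathrm{im}(T')} = \cT}} \dim_u(T') \;-\; \dim_u(U_x).
\]
Taking the supremum over $\cT$ and using that every component $T'$ of $U$ through $u$ contributes to exactly one such $\cT$, we obtain
\[
\sup_\cT \dim_x(\cT) \;=\; \sup_{T' \ni u} \dim_u(T') \;-\; \dim_u(U_x) \;=\; \dim_u(U) - \dim_u(U_x) \;=\; \dim_x(\cX),
\]
as required.

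The only non-routine point is the identification of the fibre $V_x$ with $U_x$, which depends on $\cT \to \cX$ being a monomorphism and $x$ lying in $|\cT|$; once this is in hand, the rest is essentially bookkeeping on a smooth chart together with the scheme-level formula for $\dim_u$ in terms of irreducible components.
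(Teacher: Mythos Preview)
Your argument is correct, and it takes a route that differs from the paper's in a meaningful way. The paper splits the equality into two inequalities: it invokes Lemma~\ref{lem:closed immersions} to get $\dim_x(\cT)\le\dim_x(\cX)$, and then, for the reverse direction, picks a single component $T$ of $U$ through $u$ of maximal dimension, sets $\cT$ equal to the closure of its image, and uses the inequality $\dim_u T_x\le\dim_u U_x$ to deduce $\dim_x(\cX)\le\dim_x(\cT)$. Your approach instead computes $\dim_x(\cT)$ exactly for every $\cT$ at once: forming $V=U\times_{\cX}\cT$ and using the clean identity $V\times_{\cT}\Spec k\cong U\times_{\cX}\Spec k$ (valid because $\cT\hookrightarrow\cX$ is a monomorphism and the chosen representative of $x$ factors through $\cT$) gives $\dim_x(\cT)=\dim_u V-\dim_u U_x$, from which the sup over $\cT$ immediately recovers $\dim_x(\cX)$. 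This is arguably tidier: it avoids the separate appeal to Lemma~\ref{lem:closed immersions}, and it replaces the inequality $\dim_u T_x\le\dim_u U_x$ by the stronger equality $\dim_u V_x=\dim_u U_x$.

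One small point worth making explicit: your claim that the irreducible components of $V$ are precisely the irreducible components $T'$ of $U$ with $\overline{\mathrm{im}(T')}=\cT$ is true, but the ``hence'' you give is a bit quick. The missing observation is that any irreducible component $C$ of $V$ has dense image in $|\cT|$ (Lemma~\ref{lem:map of components} applied to the smooth surjection $V\to\cT$), so the component of $U$ containing $C$ also has image with closure $\cT$, hence lies in $|V|$, forcing $C$ to equal that component of $U$. Alternatively, you can sidestep this entirely: all you actually need is $\dim_u U=\sup_{\cT}\dim_u V_{\cT}$, which follows immediately from $|U|=\bigcup_{\cT\ni x}|V_{\cT}|$ near $u$.
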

\begin{proof}
	Lemma~\ref{lem:closed immersions}
	shows that
	$\dim_x (\cT) \leq \dim_x(\cX)$ for each
	irreducible component $\cT$ passing through
	the point $x$.   Thus to prove the lemma,
	it suffices to show that
	\anumequation
	\label{eqn:desired inequality}
	\dim_x(\cX) \leq 
	\sup_{\cT} \{\dim_x(\cT)\}.
\end{equation}

Let $U\to\cX$ be a smooth cover by a scheme. If~$T$ is an irreducible
component of~$U$ then we let~$\cT$ denote the closure of its image
in~$\cX$, which is an irreducible component of~$\cX$. Let~$u\in U$ be
a point mapping to~$x$. Then we have
$\dim_x(\cX)=\dim_uU-\dim_uU_x=\sup_T\dim_uT-\dim_uU_x$, where 
the supremum is over the irreducible components of~$U$ passing
through~$u$. Choose a component $T$ for which the supremum
is achieved, and note that
$\dim_x(\cT)=\dim_uT-\dim_u T_x$.
The desired inequality~(\ref{eqn:desired inequality})
now follows from the evident inequality $\dim_u T_x \leq \dim_u U_x.$
(Note that if $\Spec k \to \cX$ is a representative of $x$,
then $T\times_{\cX} \Spec k$ is a closed subspace of $U\times_{\cX}
\Spec k$.) 
\end{proof}

\begin{alemma}
If $\cX$ is a locally Noetherian algebraic stack, and if $x \in |\cX|$, then
for any open substack $\cV$ of $\cX$ containing $x$,
there is a finite type point $x_0 \in |\cV|$ such that
$\dim_{x_0}(\cX) = \dim_x(\cV)$.
\end{alemma}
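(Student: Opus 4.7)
The plan is to pull the statement back to a smooth chart and invoke the scheme-level result Lemma~\ref{lem:dimension achieved by finite type point}. Since $\cV$ is open in $\cX$ and local dimension is computed on any smooth chart (which for $\cV$ may be taken to be the restriction to $\cV$ of a chart for $\cX$), we automatically have $\dim_{x_0}(\cX) = \dim_{x_0}(\cV)$ for any $x_0 \in |\cV|$: if $U \to \cX$ is smooth surjective from a scheme and $U_\cV := U \times_\cX \cV$ is the induced open subscheme, then for any $u \in U_\cV$ over $x_0$ the two defining quantities agree, since $\dim_u U_\cV = \dim_u U$ and $(U_\cV)_{x_0} = U_{x_0}$. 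Thus it suffices to find a finite type point $x_0 \in |\cV|$ with $\dim_{x_0}(\cV) = \dim_x(\cV)$.

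To construct $x_0$, choose a smooth surjective morphism $f\colon V \to \cV$ with $V$ a scheme, and pick $v \in |V|$ mapping to $x$. By Definition~\ref{def:dimension for stacks},
\[
\dim_x(\cV) = \dim_v(V) - \dim_v(V_x).
\]
Replace $V$ by a quasi-compact open neighborhood $V'$ of $v$, small enough that (i)~$\dim V' = \dim_v V$ (possible by the very definition of $\dim_v V$ whenever this number is finite), and (ii)~the function $v' \mapsto \dim_{v'}(V'_{f(v')})$ is constant on $V'$ with value $\dim_v(V_x)$. Shrinking as in~(ii) is legitimate by Lemma~\ref{lem:relative dimension is semi-continuous}(2), since $f$ is smooth. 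The finite-dimensional scheme $V'$ then admits, by Lemma~\ref{lem:dimension achieved by finite type point}, a closed point $v_0 \in V'$ with $\dim_{v_0}(V') = \dim V'$. Let $x_0 \in |\cV|$ be its image; the composition $\Spec\kappa(v_0) \to V' \to \cV$ is locally of finite type (it is a closed point of a scheme followed by a smooth morphism), so $x_0$ is a finite type point of $\cV$, hence of $\cX$.

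It then remains to compute
\[
\dim_{x_0}(\cV) = \dim_{v_0}(V') - \dim_{v_0}(V'_{x_0}) = \dim V' - \dim_v(V_x) = \dim_v(V) - \dim_v(V_x) = \dim_x(\cV),
\]
where the second equality uses the constancy of the relative-dimension function on $V'$ arranged in~(ii), and the third equality is~(i). Combined with $\dim_{x_0}(\cX) = \dim_{x_0}(\cV)$ this gives the claim. I do not anticipate any serious obstacle: the essential content is already packaged in Lemma~\ref{lem:dimension achieved by finite type point}, and the two shrinking steps used to produce $V'$ are made clean by the definition of local dimension and by Lemma~\ref{lem:relative dimension is semi-continuous}(2). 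The only mild point to watch is the case $\dim_x(\cV) = \infty$, which can be handled by applying the construction above to finite-dimensional open neighborhoods of $v$ of arbitrarily large dimension.
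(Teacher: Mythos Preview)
Your proof is correct and follows essentially the same route as the paper's own argument: pass to a smooth chart, shrink so that the relative fibre dimension is constant (via Lemma~\ref{lem:relative dimension is semi-continuous}(2)) and so that $\dim_v V = \dim V$, then apply Lemma~\ref{lem:dimension achieved by finite type point} to extract the desired closed point. The only cosmetic difference is that you make the reduction $\dim_{x_0}(\cX) = \dim_{x_0}(\cV)$ explicit up front and then work entirely inside $\cV$, whereas the paper absorbs this step by phrasing things in terms of the function $u \mapsto \dim_{f(u)}(\cX)$ on a chart for $\cX$ and restricting to open neighbourhoods; the paper also does not comment on the infinite-dimensional case you flag at the end.
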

\begin{proof}
Choose a smooth surjective
morphism  $f:U \to \cX$ whose source is a scheme, and consider the
function $u \mapsto \dim_{f(u)}(\cX);$
since the morphism $|U| \to |\cX|$ induced by $f$ is open (as $f$ 
is smooth) as well as  surjective (by assumption),
and takes finite type points to finite type points (by the very definition
of the finite type points of $|\cX|$), 
it suffices to show that for any $u \in U$, and any open neighbourhood of $u$,
there is a finite type point $u_0$ in this neighbourhood such that
$\dim_{f(u_0)}(\cX) = \dim_{f(u)}(\cX).$ 
Since, with this reformulation
of the problem,  the surjectivity of $f$ is no longer required, 
we may replace $U$ by the open neighbourhood of the point $u$ in question,
and thus reduce to the problem of showing that for each $u \in U$,
there is a finite type point $u_0 \in U$ such that
$\dim_{f(u_0)}(\cX) = \dim_{f(u)}(\cX).$ 
By definition
$\dim_{f(u)}(\cX) = \dim_u(U) - \dim_u(U_{f(u)}),$
while 
$\dim_{f(u_0)}(\cX) = \dim_{u_0}(U) - \dim_{u_0}(U_{f(u_0)}).$
Since $f$ is smooth, the expression $\dim_{u_0}(U_{f(u_0)})$ is locally
constant as $u_0$ varies over $U$ (by Lemma~\ref{lem:relative dimension 
is semi-continuous}~(2)), and so shrinking $U$ further around
$u$ if necessary, we may assume it is constant.  Thus the problem
becomes to show that we may find a finite type point $u_0 \in U$
for which $\dim_{u_0}(U) = \dim_u(U)$. 
Since by definition $\dim_u U$ is the minimum of the dimensions
$\dim V$, as $V$ ranges over the open neighbourhoods $V$ of $u$
in $U$, we may shrink $U$ down further around $u$ so that
$\dim_u U = \dim U$.
The existence of desired point $u_0$ then follows from
Lemma~\ref{lem:dimension achieved by finite type point}. 
\end{proof}


\begin{alemma}\label{lem: monomorphing a component in of the right dimension}
	Let $\cT \hookrightarrow \cX$ be a locally
of finite type monomorphism of algebraic stacks,
with $\cX$ {\em (}and thus also $\cT${\em )} 
being Jacobson, pseudo-catenary, and locally Noetherian.
Suppose further that $\cT$ is irreducible
of some {\em (}finite{\em )} dimension~$d$, and that $\cX$ is reduced and of dimension less
than or equal to $d$.  
Then there is a non-empty open substack $\cV$ of $\cT$ such that the induced
monomorphism $\cV \hookrightarrow \cX$ is an open immersion which identifies 
$\cV$ with an open subset of an irreducible component of~$\cX$.
\end{alemma}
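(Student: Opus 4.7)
The plan is to first use the equidimensionality of $\cT$ and the dimension bound on $\cX$ to force $\dim_x \cX = d$ along the entire image of $\cT$, then exploit the fact that a monomorphism locally of finite type is unramified, combined with generic flatness, to produce an open piece of $\cT$ on which $\cT \hookrightarrow \cX$ becomes étale, and hence an open immersion.

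First I will show $\dim_x \cX = d$ for every $x$ in the image of $|\cT|$. Lemma \ref{lem:irreducible implies equidimensional} gives $\dim_t \cT = d$ for every $t \in |\cT|$. Choose a smooth surjection $U \to \cX$ from a scheme and set $V := U \times_\cX \cT$; since any monomorphism of algebraic stacks is representable by algebraic spaces, $V$ is an algebraic space, $V \to U$ is a monomorphism locally of finite type, and $V \to \cT$ is a smooth surjection. For $v \in |V|$ with images $u$, $t$, $x$ in $|U|$, $|\cT|$, $|\cX|$, the identity of fibre products $V \times_\cT \Spec k = U \times_\cX \Spec k$ (for a representative $\Spec k \to \cT$ of $t$, whose composite to $\cX$ represents $x$) gives $\dim_v V_t = \dim_u U_x$. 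Applying Lemma \ref{lem:behaviour of dimensions w.r.t. smooth morphisms; stacky case} to both smooth maps then yields $\dim_v V - \dim_u U = d - \dim_x \cX$. Since a monomorphism locally of finite type between locally Noetherian algebraic spaces is injective on underlying points and sends specializations to specializations, we have $\dim_v V \leq \dim_u U$; combined with $\dim_x \cX \leq d$, both inequalities must be equalities, giving $\dim_v V = \dim_u U$ for every such $(v,u)$ and $\dim_x \cX = d$ for every such $x$.

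Next I produce the open $\cV$. The monomorphism $V \to U$ is unramified (a monomorphism locally of finite type of algebraic spaces is unramified), and by generic flatness there is a dense open $W \subseteq V$ on which $V \to U$ is flat; on $W$ the morphism is étale and a monomorphism, hence an open immersion. The key technical point is that $W$ is saturated with respect to the smooth surjection $V \to \cT$, so that it descends to an open substack $\cV \subseteq \cT$. This holds because the flat locus of $\cT \to \cX$ is well-defined as an open substack of $\cT$ whose preimage under $V \to \cT$ is the flat locus of $V \to U$: the smoothness of $U \to \cX$ implies $V \to U$ is flat at $v$ iff $V \to \cX$ is flat at $v$, iff $\cT \to \cX$ is flat at $t$. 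Hence $\cV \times_\cX U = V \times_\cT \cV = W$, and since $W \to U$ is an open immersion, smooth descent shows $\cV \hookrightarrow \cX$ is an open immersion. Combining Lemma \ref{lem:dimension via components} with the first step places the irreducible image of $\cV$ inside a single irreducible component of $\cX$ of dimension $d$, completing the proof.

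The main obstacle is the dimension comparison in the first part: one must combine the relative-dimension formula for smooth maps with the elementary but essential fact that monomorphisms locally of finite type do not raise dimension. The saturation argument in the second part is the other potentially subtle point, but follows cleanly from smooth descent of flatness once the flat locus is recognized as a well-defined open substack of $\cT$.
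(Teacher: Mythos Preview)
Your overall strategy is sound and the descent argument at the end (recognising the flat locus of $\cT \to \cX$ as an open substack whose preimage is the flat locus of $V \to U$, so that flat $+$ unramified $+$ mono becomes an open immersion after smooth descent) is a genuinely clean alternative to the paper's route.  However, there is a real gap in the sentence ``by generic flatness there is a dense open $W \subseteq V$ on which $V \to U$ is flat.''  Generic flatness produces a dense open of the \emph{target} $U$, not of the source; the preimage of that open in $V$ could a~priori be empty if the image of $V$ in $U$ is nowhere dense.  (Think of a point mapping into a line: the flat locus in the source is empty.)  Nothing in the phrase ``generic flatness'' rules this out, and you do not invoke the dimension equality from your first paragraph at this step.

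The gap is fillable, but filling it is exactly where the work lies.  One must use the equality $\dim_v V = \dim_u U$ from your first step to show that a generic point $\eta$ of a component of $V$ maps to a generic point $\xi$ of a component of $U$; then, since $U$ is reduced, $\cO_{U,\xi}$ is a field and flatness at $\eta$ is automatic, so the flat locus is non-empty (hence, by your irreducibility/descent argument, dense).  The paper handles this same issue differently: it invokes the \'etale-local structure of unramified morphisms to factor $V \to U$, \'etale-locally on $U$, through a closed immersion $V' \hookrightarrow V$ with $V \to U$ \'etale, and then uses the dimension comparison together with reducedness of $U$ to force that closed immersion to be an isomorphism onto an irreducible piece.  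That directly exhibits an open of $V$ on which the map to $U$ is an open immersion, bypassing any appeal to generic flatness.  Either route works, but in yours the step ``the flat locus in $V$ is non-empty'' needs an actual argument tied to the dimension equality, not a citation of generic flatness.
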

\begin{proof}
Choose a smooth surjective morphism $f:U \to \cX$ with source a scheme,
necessarily reduced since $\cX$ is,
and write $U' := \cT\times_{\cX} U$.  The base-changed morphism
$U' \to U$ is a monomorphism of algebraic spaces, locally of finite
type, and thus representable~\cite[\href{http://stacks.math.columbia.edu/tag/0418}{Tag 0418},
\href{http://stacks.math.columbia.edu/tag/0463}{Tag 0463}]{stacks-project}; since $U$ is a scheme, so is~$U'$.
The projection $f': U' \to \cT$ is again a smooth surjection.
Let $u' \in U'$, with image $u \in U$.
Lemma~\ref{lem:base-change invariance of relative dimension}
shows that $\dim_{u'}(U'_{f(u')}) = \dim_u(U_{f(u)}),$
while $\dim_{f'(u')}(\cT) =d
\geq \dim_{f(u)}(\cX)$ by Lemma~\ref{lem:irreducible implies equidimensional}
and our assumptions on~$\cT$ and~$\cX$.
Thus we see that
\anummultline
\label{eqn:dim inequality}
\dim_{u'} (U') = \dim_{u'} (U'_{f(u')}) + \dim_{f'(u')}(\cT) 
\\
\geq \dim_u (U_{f(u)}) + \dim_{f(u)}(\cX) = \dim_u (U).
\end{multline}
Since $U' \to U$ is a monomorphism, locally of finite type,
it is in particular unramified,
and so by the \'etale local structure of unramified morphisms~\cite[\href{http://stacks.math.columbia.edu/tag/04HJ}{Tag 04HJ}]{stacks-project}, we may find
a commutative diagram
$$\xymatrix{V' \ar[r]\ar[d] & V \ar[d] \\
U' \ar[r] & U}$$
in which the scheme $V'$ is non-empty,
the vertical arrows are \'etale, 
and the upper horizontal arrow is a closed immersion. 
Replacing $V$ by a quasi-compact open subset
whose image has non-empty intersection with the image of $U'$,
and replacing $V'$ by the preimage of $V$, we may further
assume that $V$ (and thus $V'$) is quasi-compact. 
Since $V$ is also locally Noetherian,
it is thus Noetherian, and so is the union of finitely many irreducible 
components. 

Since \'etale morphisms preserve pointwise
dimension~\cite[\href{http://stacks.math.columbia.edu/tag/04N4}{Tag
  04N4}]{stacks-project},  we deduce from~(\ref{eqn:dim inequality})
that for any point $v' \in V'$,
with image $v \in V$, we have
$\dim_{v'}( V') \geq \dim_v(V)$. 
In particular, the image of $V'$ can't be contained in the intersection
of two distinct irreducible components of $V$, and so we may find 
at least one irreducible open subset of $V$ which has non-empty intersection
with $V'$; replacing $V$ by this subset, we may assume that $V$ is integral
(being both reduced and irreducible).  From the preceding inequality 
on dimensions, we conclude that the closed immersion $V' \hookrightarrow V$
is in fact an isomorphism.
If we let $W$ denote the image of $V'$
in~$U'$, then $W$ is a non-empty
open subset of $U'$ (as \'etale morphisms are open),
and the induced monomorphism $W \to U$ is \'etale
(since it is so \'etale locally on the source, i.e.\ after pulling back
to $V'$), and hence is an open immersion (being an \'etale monomorphism).
Thus, if we let $\cV$ denote the image of $W$ in $\cT$,
then $\cV$ is a dense (equivalently, non-empty) open substack of $\cT$,
whose image is dense in an irreducible component of~$\cX$.
Finally,
we note that the morphism is $\cV \to \cX$ is smooth (since its composite
with the smooth morphism $W\to \cV$ is smooth),
and also a monomorphism, and thus is an open immersion.
\end{proof}

\begin{alemma}
	\label{lem:dims of images}
	Let $f: \cT \to \cX$ be a locally of finite type
	morphism of Jacobson, pseudo-catenary, and locally Noetherian 
	algebraic stacks, 
	whose source is irreducible and whose target is quasi-separated,
	and let $\cZ \hookrightarrow \cX$ denote the scheme-theoretic
	image of $\cT$.
	Then for every finite type point $t \in |T|$, 
	we have that $\dim_t( \cT_{f(t)}) \geq \dim \cT  - \dim \cZ,$
	and there is a non-empty {\em (}equivalently, dense{\em )}
	open subset of $|\cT|$ over which equality holds.
\end{alemma}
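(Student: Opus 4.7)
The plan is to reduce, via smooth scheme covers of $\cT$ and $\cX$, to the well-known scheme-theoretic fact that for a locally of finite type, scheme-theoretically dominant morphism between irreducible universally catenary Jacobson locally Noetherian schemes, the fibre dimension at a finite-type point of the source is at least the difference of the dimensions of source and target, with equality on a dense open subset.

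As a first simplification: since $\cX$ is quasi-separated, the underlying space $|\cZ|$ is the closure of $|f(\cT)|$ in $|\cX|$, hence is irreducible; and replacing $\cX$ by $\cZ$ leaves each fibre $\cT_{f(t)}$ unchanged (since $\cZ \hookrightarrow \cX$ is a monomorphism) while leaving $\dim\cZ$, the right-hand side of the asserted inequality, the same. So we may assume that $f$ is scheme-theoretically dominant and that $\cX$ is irreducible. Lemma~\ref{lem:irreducible implies equidimensional} then gives that both $\dim_t(\cT) = \dim\cT$ and $\dim_x(\cX) = \dim\cX$ are constant.

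Using pseudo-catenarity of $\cX$ and Lemma~\ref{lem:catenary covers} applied to the locally of finite type morphism $\cT\times_\cX V \to \cT$, choose smooth surjections $V \to \cX$ and $U \to \cT\times_\cX V$ whose sources are universally catenary schemes. The composite $U \to \cT$ is then smooth surjective, while the composite $g\colon U \to V$ is a locally of finite type morphism of schemes which is moreover scheme-theoretically dominant, since $\cT\times_\cX V \to V$ is the flat base change of the scheme-theoretically dominant $f$ and $U \to \cT\times_\cX V$ is smooth surjective. Fix a finite-type $t \in |\cT|$, choose $u \in U$ above $t$, and set $v := g(u) \in V$, which lies over $x := f(t) \in |\cX|$. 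Applying Lemma~\ref{lem:behaviour of dimensions w.r.t. smooth morphisms; stacky case} successively to the smooth morphisms $U \to \cT$, $V \to \cX$, $U_v \to \cT_x$, and $U_t \to V_x$ (the latter two being base changes of the smooth $U \to \cT\times_\cX V$ along $\cT_x \to \cT\times_\cX V$ and $V_x \to \cT\times_\cX V$ respectively) yields
\[
\dim_u U = \dim\cT + \dim_u(U_t), \qquad \dim_v V = \dim\cX + \dim_v(V_x),
\]
together with the identity $\dim_u(U_v) = \dim_t(\cT_x) + \dim_u(U_t) - \dim_v(V_x)$, obtained by observing that the fibre of the smooth morphism $U_v \to \cT_x$ at $t$ coincides with the fibre of the smooth morphism $U_t \to V_x$ at $v$, both being the fibre of $U \to \cT\times_\cX V$ over the point $(t,v)$. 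A short algebraic rearrangement produces the key relation
\[
\dim_u(U_v) - (\dim_u U - \dim_v V) = \dim_t(\cT_x) - (\dim\cT - \dim\cX).
\]

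It therefore suffices to prove, at every finite-type $u \in U$, the scheme-level inequality $\dim_u(U_v) \geq \dim_u U - \dim_v V$, with equality on a dense open subset of $U$. After restricting $U$ and $V$ to suitable irreducible Zariski neighbourhoods of $u$ and $v$ (so that $\dim_u U = \dim U$ and $\dim_v V = \dim V$ by Lemma~\ref{lem:constancy of dimension}), this is a standard application of the dimension formula for locally of finite type, scheme-theoretically dominant morphisms between irreducible universally catenary Jacobson locally Noetherian schemes. Transferring through the key relation then yields the desired inequality at every finite-type $t \in |\cT|$, and the dense open equality locus in $U$ descends, via the smooth surjection $U \to \cT$, to a non-empty (hence dense) open subset of $|\cT|$ on which equality holds. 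The main obstacle is the careful dimension bookkeeping producing the key identity across the tower $U \to \cT\times_\cX V \to V$; a secondary subtlety is arranging the shrinking step so as to preserve scheme-theoretic dominance of $g$, which requires selecting an irreducible component of $U$ through $u$ whose image is dense in an irreducible component of $V$ through $v$, and such a choice exists by the global scheme-theoretic dominance of $g$.
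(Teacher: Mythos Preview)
Your overall strategy---pass to smooth scheme covers, derive a pointwise identity
\[
\dim_u(U_v) - (\dim_u U - \dim_v V) \;=\; \dim_t(\cT_x) - (\dim\cT - \dim\cX),
\]
and then invoke the scheme-level dimension formula---is exactly the paper's approach, and your derivation of the key identity via the two smooth morphisms $U_v \to \cT_x$ and $U_t \to V_x$ sharing a common fibre is correct. However, there are two genuine gaps.

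First, and most importantly, your handling of the inequality ``at every finite-type point'' does not work as written. You propose to ``restrict $U$ and $V$ to suitable irreducible Zariski neighbourhoods of $u$ and $v$'', but such neighbourhoods need not exist when $u$ or $v$ lies on several components; and your fallback---``select an irreducible component of $U$ through $u$ whose image is dense in an irreducible component of $V$ through $v$''---is not guaranteed by global scheme-theoretic dominance of $g$, nor does it ensure that the selected components realise $\dim_u U$ and $\dim_v V$. Even granting such a choice, one then needs $\dim_u U_0 = \dim \cO_{U_0,u}$, i.e.\ equicodimensionality of the component, which is not automatic from the hypotheses. The paper avoids all of this by invoking upper semi-continuity of $t \mapsto \dim_t(\cT_{f(t)})$ (Lemma~\ref{lem:relative dimension is semi-continuous}) at the outset: once you know equality on a dense open, the inequality everywhere follows immediately, and you are then free to shrink $\cT$ and replace the scheme covers by irreducible pieces without worrying about individual points.

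Second, a smaller gap: you assert that $\cT\times_\cX V \to V$ is scheme-theoretically dominant because it is the flat base change of $f$, but formation of scheme-theoretic image commutes with flat base change only for quasi-compact morphisms. The paper first shrinks $\cT$ (using quasi-separatedness of $\cX$) so that $f$ becomes quasi-compact before making this step.
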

\begin{proof}
        %
        %
	Replacing $\cX$ by $\cZ$, 
	we may and do assume that $f$ is scheme theoretically dominant,
	and also that $\cX$ is irreducible.
	By the upper semi-continuity of fibre dimensions
	(Lemma~\ref{lem:relative dimension is semi-continuous}~(1)),
	it suffices to prove that the equality $\dim_t( \cT_{f(t)}) =\dim \cT  - \dim \cZ$ holds for $t$ lying in
	some non-empty open substack of $\cT$.
	For this reason, in the argument we are always free 
	to replace $\cT$ by a non-empty open substack.

	Let $T' \to \cT$ be a smooth surjective morphism whose source
	is a scheme, and let $T$ be a non-empty quasi-compact open subset
	of $T'$.  Since $\cY$ is quasi-separated, we find 
	that $T \to  \cY$ is quasi-compact
          (by~\cite[\href{http://stacks.math.columbia.edu/tag/050Y}{Tag
            050Y}]{stacks-project}, applied to the morphisms
    $T \to \cY \to \Spec \Z$).
	Thus, if we replace $\cT$ by the image of $T$ in $\cT$,
	then we may assume (appealing 
          to~\cite[\href{http://stacks.math.columbia.edu/tag/050X}{Tag
            050X}]{stacks-project})
	that the morphism $f:\cT \to \cX$ is quasi-compact.

	If we choose a smooth surjection $U \to \cX$ with $U$ a scheme,
	then Lemma~\ref{lem:map of components} ensures that
	we may find an irreducible open subset $V$ of $U$ such
	that $V \to \cX$ is smooth and scheme-theoretically dominant.
	Since scheme-theoretic dominance for quasi-compact morphisms
	is preserved by flat base-change,
	the base-change $\cT \times_{\cX} V \to V$ of the scheme-theoretically
	dominant morphism $f$ is again
	scheme-theoretically dominant.   We let $Z$ denote a scheme
	admitting a smooth surjection onto this fibre product;
	then $Z \to \cT \times_{\cX} V \to V$ 
	is again scheme-theoretically dominant.
	Thus we may find an irreducible
	component $C$ of $Z$ which scheme-theoretically
	dominates~$V$.  
	Since the composite  $Z \to \cT\times_{\cX} V \to \cT$ is smooth,
	and since $\cT$ is irreducible,
	Lemma~\ref{lem:map of components} shows that any irreducible
	component of the source has dense image in $|\cT|$. 
	We now replace
	$C$ by a non-empty open subset $W$ which is disjoint from every other
	irreducible component of~$Z$, and 
	then replace $\cT$ and $\cX$ by the images of $W$ 
	and $V$
	(and apply Lemma~\ref{lem:irreducible implies equidimensional}
	to see that this
	doesn't change the dimension of either $\cT$ or $\cX$).
        If we let $\cW$ denote the image of the morphism
	$W \to \cT\times_{\cX} V$, 
	then $\cW$ is open in $\cT\times_{\cX} V$ (since the
	morphism $W \to \cT\times_{\cX} V$ is smooth),
	and is irreducible (being the image of an irreducible
	scheme).  Thus we end up with a commutative diagram
	$$\xymatrix{W \ar[dr] \ar[r]  & \cW \ar[r] \ar[d]
		& V \ar[d] \\ & \cT \ar[r] & \cX}$$
	in which $W$ and $V$ are schemes,
	the vertical arrows are smooth and surjective,
	the diagonal arrows and the left-hand 
        upper horizontal arrow	are smooth, 
	and the induced morphism $\cW \to \cT\times_{\cX} V$ is an
	open immersion.
	Using this diagram, together with the definitions
	of the various dimensions involved in
        the statement of the lemma, we will reduce our verification
	of the lemma to the case of schemes, where it is known.

        Fix $w \in |W|$ with image $w' \in |\cW|$,
	image $t \in |\cT|$, image $v$ in $|V|$,
	and image $x$ in~$|\cX|$.
	Essentially by definition (using the
	fact that $\cW$ is open in $\cT\times_{\cX} V$, and that
	the fibre of a base-change is the base-change of the fibre),
	we obtain the equalities
	$$\dim_v V_x = \dim_{w'} \cW_t $$
	and
        $$ \dim_t \cT_x = \dim_{w'} \cW_v.$$
        Again by definition (the diagonal arrow and right-hand vertical
	arrow in our diagram realise $W$ and $V$ as smooth covers by 
	schemes of the stacks $\cT$ and $\cX$), we find that 
	$$\dim_t \cT = \dim_w W - \dim_w W_t $$
	and
	$$ \dim_x \cX = \dim_v V - \dim_v V_x.$$
	Combining the equalities, we find that
	\begin{multline*}
	\dim_t \cT_x - \dim_t \cT + \dim_x \cX
	\\
	= \dim_{w'} \cW_v - \dim_w W + \dim_w W_t + \dim_v V - \dim_{w'} \cW_t.
	\end{multline*}
	Since $W \to \cW$ is a smooth surjection, the same is true 
	if we base-change over the morphism $\Spec \kappa(v) \to V$
	(thinking of $W \to \cW$ as a morphism over $V$),
	and from this smooth morphism we obtain the first of the following
	two equalities
	$$\dim_w W_v - \dim_{w'} \cW_v = \dim_w (W_v)_{w'} = \dim_w W_{w'};$$
	the second equality follows via a direct comparison of the
	two fibres involved.
	Similarly, if we think of $W \to \cW$ as a morphism of schemes
	over $\cT$, and base-change over some representative of the point
	$t \in |\cT|$, we obtain the equalities
	$$\dim_w W_t - \dim_{w'} \cW_t = \dim_w (W_t)_{w'} = \dim_w W_{w'}.$$
	Putting everything together, we find that
	$$
	\dim_t \cT_x - \dim_t \cT + \dim_x \cX
	=  \dim_w W_v - \dim_w W + \dim_v V.
	$$
	Our goal is to show that the left-hand side of this equality
        vanishes for a non-empty open subset
	of $t$.  As $w$ varies over a non-empty open subset of $W$,
	its image $t \in |\cT|$ varies over a non-empty open
	subset of $|\cT|$ (as $W \to \cT$ is smooth).  

We are therefore reduced to showing that if $W\to V$ is a
scheme-theoretically dominant morphism of irreducible locally
Noetherian schemes that is locally of finite type,
then there is a non-empty open subset of
points $w\in W$ such that $\dim_w W_v =\dim_w W - \dim_v V$
(where $v$ denotes the image of $w$ in $V$).
This is a standard fact,
whose proof we recall for the convenience of the reader.

We may replace $W$ and $V$ by their underlying reduced subschemes
without altering the validity (or not) of this equation,
and thus we may assume that they are in fact integral schemes. 
Since $\dim_w W_v$ is locally constant on $W,$ replacing $W$ 
by a non-empty open subset if necessary, we may assume that $\dim_w W_v$
is constant, say equal to $d$.  Choosing this open subset to be affine,
we may also assume that the morphism $W\to V$ is in fact of finite type.
Replacing $V$ by a non-empty open subset if necessary
(and then pulling back $W$ over this open subset; the resulting pull-back
is non-empty, since the flat base-change of a quasi-compact 
and scheme-theoretically
dominant morphism remains scheme-theoretically dominant),
we may furthermore assume that $W$ is flat over $V$.
The morphism $W\to V$ is thus of relative dimension $d$
in the sense
          of~\cite[\href{http://stacks.math.columbia.edu/tag/02NJ}{Tag
            02NJ}]{stacks-project},
   and it follows 
          from~\cite[\href{http://stacks.math.columbia.edu/tag/0AFE}{Tag
            0AFE}]{stacks-project} that
   $\dim_w(W) = \dim_v(V) + d,$ as required.
\end{proof}

\begin{aremark}
	We note that in the context of the preceding lemma,
	it need not be that $\dim \cT \geq \dim \cZ$; this does
	not contradict the inequality in the statement of the lemma, because
	the fibres of the morphism $f$ are again algebraic stacks, and
	so may have negative dimension.  This is illustrated by taking
	$k$ to be a field, and applying the lemma to the morphism
	$[\Spec k/\Gm] \to \Spec k$.

	If the morphism $f$ in the statement of the lemma is assumed
	to be quasi-DM (in the sense
        of~\cite[\href{http://stacks.math.columbia.edu/tag/04YW}{Tag
        04YW}]{stacks-project}; e.g.\ morphisms that are
        representable by algebraic spaces are quasi-DM),
	then the fibres of the morphism over points of the target
	are quasi-DM algebraic stacks, and hence are of non-negative
	dimension.  In this case, the lemma implies
	that indeed $\dim \cT \geq \dim \cZ$.  In fact, we obtain
	the following more general result.
\end{aremark}

\begin{acor}
	\label{cor:dims of images}
	Let $f: \cT \to \cX$ be a locally of finite type
	morphism of Jacobson, pseudo-catenary, and locally Noetherian 
	algebraic stacks
        which is quasi-DM,
	whose source is irreducible and whose target is quasi-separated,
	and let $\cZ \hookrightarrow \cX$ denote the scheme-theoretic
	image of $\cT$.
	Then $\dim \cZ \leq \dim \cT$,
	and furthermore, exactly one of the following two conditions holds:
	\begin{enumerate}
		\item for every finite type point $t \in |T|,$
			we have
			$\dim_t(\cT_{f(t)}) > 0,$ in which
			case $\dim \cZ < \dim \cT$; or
		\item   $\cT$ and $\cZ$
			are of the same dimension.
	\end{enumerate}
\end{acor}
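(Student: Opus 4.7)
The plan is to derive both assertions from Lemma~\ref{lem:dims of images} applied to $f:\cT \to \cX$, using as extra input the observation already highlighted in the remark preceding the statement: the fibres of a quasi-DM morphism are quasi-DM algebraic stacks, and quasi-DM algebraic stacks have non-negative dimension at every point (they admit smooth covers by schemes whose relative dimension at every source point is zero, since the stabilisers are locally quasi-finite).

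Concretely, I would first note that quasi-DM is preserved under base change, so for every representative $\Spec k \to \cX$ of a point $x$, the fibre $\cT \times_{\cX} \Spec k$ is a quasi-DM algebraic stack; hence $\dim_t(\cT_{f(t)}) \geq 0$ for every finite type point $t \in |\cT|$. Next, Lemma~\ref{lem:dims of images} supplies a non-empty (hence dense) open substack $\cU \subseteq \cT$ on which
\[
\dim_t(\cT_{f(t)}) \;=\; \dim \cT - \dim \cZ
\]
holds for every finite type point $t \in |\cU|$. Since finite type points are dense in any non-empty open substack of a locally Noetherian algebraic stack (this is used repeatedly in the section, e.g.\ in the paragraph preceding Lemma~\ref{lem: monomorphing a component in of the right dimension}), such a $t$ exists, and combining the equality with the non-negativity gives
\[
\dim \cT - \dim \cZ \;=\; \dim_t(\cT_{f(t)}) \;\geq\; 0,
\]
i.e.\ the first assertion $\dim \cZ \leq \dim \cT$.

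For the dichotomy, if $\dim \cZ = \dim \cT$ we are in case~(2). Otherwise $\dim \cT - \dim \cZ \geq 1$, and then the global inequality half of Lemma~\ref{lem:dims of images} forces $\dim_t(\cT_{f(t)}) \geq \dim \cT - \dim \cZ > 0$ for \emph{every} finite type point $t \in |\cT|$, putting us in case~(1). The two cases are mutually exclusive: under case~(2), the equality-locus provided by Lemma~\ref{lem:dims of images} supplies finite type points $t$ with $\dim_t(\cT_{f(t)}) = \dim \cT - \dim \cZ = 0$, directly contradicting~(1). There is no real obstacle beyond invoking the non-negativity of quasi-DM dimensions — the content of the corollary is essentially a clean bookkeeping of the inequality and the equality-locus already established in Lemma~\ref{lem:dims of images}.
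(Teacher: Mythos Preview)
Your proposal is correct and follows essentially the same approach as the paper: invoke the non-negativity of fibre dimensions for quasi-DM morphisms, then apply Lemma~\ref{lem:dims of images} to obtain both the global inequality and the equality on a dense open, from which the dichotomy follows. The paper's proof is considerably terser (it leaves the case analysis implicit), but the underlying argument is the same as yours.
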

\begin{proof}
	As was observed in the preceding remark,
	the dimension of a quasi-DM stack is always non-negative,
	from which we conclude that $\dim_t \cT_{f(t)} \geq 0$
	for all $t \in |\cT|$, with the equality
	$$\dim_t \cT_{f(t)} = \dim_t \cT - \dim_{f(t)} \cZ$$ holding
	for a dense open subset of points $t\in |\cT|$.
\end{proof}

We close this note by establishing a formula allowing us to
compute $\dim_x(\cX)$ in terms of properties of the versal ring
to $\cX$ at $x$.
In order to state a clean result,
we will make certain hypotheses on the base-scheme $S$
(which has remained implicit up to this point).
As with the discussion at the
end of Section~\ref{sec:multiplicities}, these hypotheses may not be 
needed for the result to hold, but they allow for a simple 
argument.

Before stating our hypotheses, 
we recall some topological results.  These results are essentially
contained in~\cite[\S 0.14.3]{MR0173675}. 
However, as is pointed out in~\cite{MR3631826},
the key proposition of that discussion, namely~\cite[\S 0 Prop.~14.3.3]{MR0173675},
is in error.  As is made implicit in the examples of \cite{MR3631826},
and was pointed out explicitly to us by Brian Conrad, 
the error occurs because a topological space can be equicodimensional
without its irreducible components themselves being equicodimensional.
We now state (and recall the proof of) a corrected version of that
proposition (and of Cor.~14.3.5, which is deduced from
Prop.~14.3.3).

We first recall the definition of equicodimensionality.

\begin{adf}
	A finite-dimensional topological space $X$
	is called {\em equicodimensional} if $\codim(Y,X)$
	is constant as $Y$ ranges over all the minimal 
	irreducible closed subsets of $X$.
	(By considering a maximal chain of irreducible closed
	subsets of $X$, we then see that this constant is equal
	to $\dim X$.)
\end{adf}

\begin{alemma}
	\label{lem:dimension lemma}
	If $X$ is an irreducible, equicodimensional,
	finite-dimensional topological
	space, then the following are equivalent:

	(1)  $X$ is catenary.

	(2) For any irreducible closed subsets $Y\subseteq Z$
	of $X$,
	we have $$\dim Y + \codim(Y,Z) = \dim Z.$$

	(3) All maximal chains of irreducible closed subsets
	of $X$ have the same length.

	Furthermore, if these equivalent conditions hold,
	then any irreducible closed subset of $X$ is also
	equicodimensional.
\end{alemma}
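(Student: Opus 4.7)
The plan is to prove the cyclic implications $(1)\Rightarrow(3)\Rightarrow(2)\Rightarrow(1)$ and then to deduce the ``furthermore'' claim from~(2). For $(1)\Rightarrow(3)$ I would note that a maximal chain of irreducible closed subsets of $X$ is, by maximality, a saturated chain from a minimal irreducible closed subset $M$ up to $X$; catenarity says its length is $\codim(M,X)$, and equicodimensionality says this value is independent of $M$, so (being the supremum over such chains) it equals $\dim X$. Hence every maximal chain has length $\dim X$.

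For the heart of the argument, $(3)\Rightarrow(2)$, the idea is to bracket any saturated chain from $Y$ to $Z$ between saturated chains at the bottom and top to produce a full maximal chain of $X$. Explicitly, given $Y\subseteq Z$, choose saturated chains from some minimal $M\subseteq Y$ up to $Y$ (of length $a$), from $Y$ to $Z$ (of length $b$), and from $Z$ to $X$ (of length $c$); the concatenation is a maximal chain, so~(3) forces $a+b+c=\dim X$. The key move is then a swap: given two such decompositions with lengths $(a,b,c)$ and $(a',b',c')$, plugging the pieces of one into the other in turn yields further maximal chains all of length $\dim X$, which forces $a=a'$, $b=b'$, and $c=c'$ individually. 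Once each length is an invariant of $(Y,Z,X)$, one reads off $a=\dim Y$ (since the minimal irreducible closed subsets of $Y$ are exactly the minimal irreducible closed subsets of $X$ lying in $Y$, so maximal chains in $Y$ correspond precisely to the chains contributing to $a$), $b=\codim(Y,Z)$, and, applying the same analysis with $Y$ replaced by $Z$ throughout, $\dim Z=a+b$. The desired equality $\dim Y+\codim(Y,Z)=\dim Z$ falls out.

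For $(2)\Rightarrow(1)$ I would take an arbitrary saturated chain $Y=W_0\subsetneq W_1\subsetneq\cdots\subsetneq W_n=Z$ and observe that saturation forces $\codim(W_i,W_{i+1})=1$, since a chain of length~$\geq 2$ from $W_i$ to $W_{i+1}$ would provide an insertion. Then~(2) telescopes to give $\dim Z-\dim Y=n$, while~(2) applied directly to the pair $(Y,Z)$ gives $\codim(Y,Z)=\dim Z-\dim Y$; thus $n=\codim(Y,Z)$, so every saturated chain has length $\codim(Y,Z)$, which is catenarity. The ``furthermore'' assertion is then immediate from~(2): if $Y\subseteq X$ is irreducible closed and $M$ is minimal in $Y$, then $M$ is also minimal in $X$ (any strictly smaller irreducible closed subset of $X$ would automatically lie in $Y$), whence $\dim M=0$, and~(2) gives $\codim(M,Y)=\dim Y$, a value independent of $M$.

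The main technical point I expect to have to handle carefully is the swapping argument in $(3)\Rightarrow(2)$: one has to verify that the three lengths $a,b,c$ are each well-defined invariants and that the requisite saturated chains (in particular, the existence of a minimal $M\subseteq Y$, which uses that $X$ is finite dimensional so that any descending chain from $Y$ terminates) are actually available.
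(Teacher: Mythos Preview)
Your proof is correct and uses the same core idea as the paper --- concatenating saturated chains into maximal ones and then varying the pieces independently to pin down each length --- but you organise the implications in the opposite cyclic order. The paper proves $(1)\Rightarrow(2)\Rightarrow(3)\Rightarrow(1)$: in its $(1)\Rightarrow(2)$ step it uses only a two-piece decomposition (minimal $Y_0$ to $Y$, then $Y$ to $X$), establishes the ``furthermore'' claim about equicodimensionality of $Y$ along the way, and then reduces the general case $Y\subseteq Z$ to the case $Z=X$ by replacing $X$ with $Z$ (which it has just shown is again equicodimensional). Your $(3)\Rightarrow(2)$ instead handles arbitrary $Y\subseteq Z$ directly via a three-piece decomposition and swap argument, which avoids the intermediate reduction and lets you defer the ``furthermore'' to a clean one-line deduction from~(2) at the end. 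Both routes are of comparable length; yours is arguably a touch more streamlined in that $(1)\Rightarrow(3)$ is immediate and the equicodimensionality of closed irreducible subsets never needs to be invoked mid-argument.
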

\begin{proof}
	Let $Y$ be an irreducible closed subset of $X$,
	and consider maximal chains of irreducible closed sets
	\numequation
	\label{eqn:Y chain}
	Y_0 \subset Y_1 \subset \cdots \subset Y_a = Y
\end{equation}
        and 
	\numequation
	\label{eqn:X chain}
	Y = X_0 \subset X_1 \subset \cdots \subset X_b = X
\end{equation}
Since~(\ref{eqn:Y chain}) is maximal, we see that $Y_0$ is a
minimal irreducible closed subset of $Y$ (or, equivalently, of $X$).
Concatenating these two chains yields a maximal chain of
irreducible closed subsets in $X$. 

Suppose now that (1) holds, i.e.\ that $X$ is catenary.  Then
all maximal chains joining $Y_0$ to $X$ have
the same length, which is then $\codim(Y_0,X)$, which also
equals $\dim X$ (since $X$ is equicodimensional, by assumption).   Thus we find that
$a + b = \dim X$, and in particular is independent of the choice
of either chain.  Varying~(\ref{eqn:Y chain}), while leaving
(\ref{eqn:X chain}) fixed, we find that the value of $a$ is independent
of the choice of the maximal chain~(\ref{eqn:Y chain}).  Thus we see
that $a = \dim Y$, and also that $\codim(Y_0,Y) = \dim Y$ for any
minimal irreducible closed subset of $Y$ (so that $Y$ is again
equicodimensional, as claimed).

Now fixing the maximal chain~(\ref{eqn:Y chain}), and varying the
maximal chain~(\ref{eqn:X chain}), we find that the value of $b$
is independent of the choice of chain~(\ref{eqn:X chain}), 
and in particular that $b = \codim(Y,X)$.  Thus we may rewrite the
equation $a + b = \dim X$ as $\dim Y + \codim(Y,X) = \dim X$, 
showing that~(2) holds in the case when $Z = X$.  
If we consider the general case of~(2),
then since $Z$ is irreducible (by assumption), finite-dimensional 
and catenary (being a closed subset of a finite-dimensional and catenary space),
and equicodimensional (as we proved above), we may replace $X$ by $Z$,
and hence deduce the general case of (2) from the special case
already proved.

Suppose next that~(2) holds,  and consider a maximal chain
of irreducible closed subsets of $X$, say
$$X_0 \subset X_1 \subset \cdots \subset X_d = X.$$
Noting that $\dim X_0 = 0$ (as $X_0$ is minimal), and
also that $\codim(X_i,X_{i+1}) = 1$ (since by assumption
there is no irreducible closed subset lying strictly between $X_i$ 
and $X_{i+1}$), we find, by repeated application of~(2), 
that $\dim X_i = i$.  In particular, $d = \dim X$ is independent
of the chain chosen, so that~(3) holds.

Finally, suppose that~(3) holds;  we wish to show that~(1) also holds.
If we consider chains
of the form (\ref{eqn:Y chain}) and~(\ref{eqn:X chain}), and
their concatenation, then~(3) implies that $a + b = \dim X$ is indepedent
of the choice of either chain, and thus, by varying these chains
independently, that each of $a$ and $b$ is independent of the
choice of chain.

Now let $Y \subseteq Z$ be an inclusion of irreducible closed
subsets of $X$.  We wish to show that all maximal chains of irreducible
closed subsets joining $Y$ and $Z$ are of the same length.
By applying what we have just proved to $Z$,
we find that $Z$ also satisfies~(3).  Thus we may replace $X$ by $Z$,
and hence assume that $Z = X$.  But we have already shown that
all maximal chains of the form~(\ref{eqn:X chain}) are of the same
length.  Thus $X$ is indeed catenary.
\end{proof}

Although we don't need it, 
we also note the following result,
which among other things
provides a purely topological variant of Lemma~\ref{lem:constancy
of dimension}.
(Note, though, that~\cite[(10.7.3)]{MR0217086} gives an example of a Jacobson,
universally catenary, integral, Noetherian scheme $S$ 
which is not equicodimensional;
this gives an example
of a situation to which Lemma~\ref{lem:constancy of dimension} applies,
although Lemma~\ref{lem:equicodimensionality of opens} does not.)

\begin{alemma}
	\label{lem:equicodimensionality of opens}
	Let $X$ be an irreducible, equicodimensional, finite-dimensional,
	Jacobson topological space.  If $U$ is a non-empty open
	subset of $X$, then $U$ is also irreducible, equicodimensional,
	finite-dimensional, and Jacobson.  Furthermore, we have
	that $\dim U = \dim X$.
\end{alemma}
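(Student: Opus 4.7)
The plan is to establish the four claimed properties of $U$ in sequence, with the crux lying in an auxiliary identification of closed points. First, $U$ is irreducible as a non-empty open subset of the irreducible space $X$, and $U$ is Jacobson because locally closed subspaces of Jacobson spaces are Jacobson (a standard fact). The key auxiliary observation, on which the remaining assertions rest, is that the closed points of $U$ coincide with the points of $U$ that are closed in $X$. One direction is immediate. For the other, if $x \in U$ is closed in $U$, set $Z := \overline{\{x\}}^X$; then $Z$ is a closed, hence Jacobson, subspace of $X$, and $U \cap Z$ is a non-empty open subset of the irreducible set $Z$. Since closed subsets of $U$ have the form $U \cap W$ with $W$ closed in $X$, one computes $U \cap Z = \overline{\{x\}}^U = \{x\}$. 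The Jacobson property applied to the non-empty open subset $U \cap Z = \{x\}$ of $Z$ produces a closed point of $Z$ inside $\{x\}$, which must be $x$; hence $\{x\}$ is closed in $Z$, and therefore closed in $X$.

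Next, I would verify $\dim U \leq \dim X$: given any strict chain $Y_0 \subsetneq Y_1 \subsetneq \cdots \subsetneq Y_k$ of irreducible closed subsets of $U$, the closures $\overline{Y_i}^X$ form a strict chain of irreducible closed subsets of $X$, since $U \cap \overline{Y_i}^X = Y_i$ (as $Y_i$ is closed in $U$). Thus $\dim U \leq \dim X =: d < \infty$.

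For the reverse inequality and equicodimensionality, I would pick any minimal irreducible closed subset $\{u\}$ of $U$, so that $u$ is a closed point of $U$; by the auxiliary fact, $u$ is then closed in $X$. Since $X$ is equicodimensional of dimension $d$, there is a strict chain
\[
\{u\} = Z_d \subsetneq Z_{d-1} \subsetneq \cdots \subsetneq Z_0 = X
\]
of irreducible closed subsets of $X$. Intersecting with $U$ gives subsets $U \cap Z_i$, each containing $u$ and hence non-empty; each is open in the irreducible $Z_i$ and therefore itself irreducible. Strictness is preserved, for if $U \cap Z_i = U \cap Z_{i+1}$, then combining this with the density of $U \cap Z_i$ in $Z_i$ would force $Z_i = \overline{U \cap Z_i}^X \subseteq \overline{Z_{i+1}}^X = Z_{i+1}$, a contradiction. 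This produces a strict chain of length $d$ of irreducible closed subsets of $U$ ending at $\{u\}$, so $\codim(\{u\}, U) \geq d$. Combined with $\dim U \leq d$ this yields $\dim U = d$ and $\codim(\{u\}, U) = d$ for every minimal irreducible closed subset $\{u\}$, giving equicodimensionality.

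The main obstacle is the auxiliary identification of closed points of $U$ with closed points of $X$ lying in $U$: without it, one cannot apply the equicodimensionality hypothesis on $X$ to a chosen minimal irreducible closed subset of $U$, since a priori the closure of $\{u\}$ in $X$ could be strictly larger than $\{u\}$, and the natural chain in $X$ ending at $\overline{\{u\}}^X$ need not have length $d$. Once that identification is in hand, the remainder is a direct manipulation of chains under closure and intersection.
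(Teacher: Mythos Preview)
Your proof is correct and follows essentially the same approach as the paper's: both hinge on the identification of the closed points of $U$ with the closed points of $X$ lying in $U$, and then transport chains between $U$ and $X$ via the closure/intersection bijection to deduce $\codim(u,U) = \dim X$ for every closed point $u \in U$. The only differences are organizational: you prove the closed-point identification in detail (the paper simply asserts it as a standard consequence of Jacobsonness), and you construct a single chain of length $d$ in $U$ rather than invoking the full bijection between maximal chains in $U$ through $u$ and maximal chains in $X$ through $u$; but these amount to the same argument.
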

\begin{proof}
	It is standard that $U$ is again irreducible and Jacobson.
	The function $T\mapsto \overline{T}$ (closure in $X$)
	induces an order-preserving bijection between irreducible closed 
	subsets of $U$ and irreducible closed subsets of $X$ 
	that have non-empty intersection with $U$; thus $U$ is
	certainly also finite-dimensional.

	Since $X$ and $U$ are Jacobson, the minimal irreducible
	closed subsets of either $X$ or $U$ are just
	the closed points, and the closed points of $U$ are precisely
        the closed points of $X$ that lie in $U$.	
	Thus, under the bijection $T\mapsto \overline{T}$ described
	above, the collection of
	maximal chains of irreducible closed subsets of $U$
	containing some given closed point $u \in U$
	maps bijectively to the collection of
	maximal chains of irreducible closed subsets of $X$
	containing the same closed point $u$.  In particular,
        we find that
	$$\codim(u,U) = \codim(u,X) = \dim X$$
	(the last equality hoding 
       	since $X$ is equidimensional, by assumption.)
	We thus see that $\codim(u,U)$ is independent of the particular
	closed point $u \in U$, so~$U$ is equicodimensional.  Furthermore, it is then
	necessarily equal to $\dim U$,
	and so we also find that $\dim U   = \dim X$, as claimed.
\end{proof}

We next note the following scheme-theoretic result.

\begin{alemma}
	\label{lem:comparing dimensions}
	If $S$ is a Jacobson, catenary, locally Noetherian scheme,
	all of whose 
	irreducible components are of finite 
	dimension and equicodimensional,
	and if $s \in S$ is a finite type point {\em (}or equivalently, 
	a closed point, by Jacobsonness{\em )},
	then $\dim_s S = \dim \mathcal O_{S,s}.$
\end{alemma}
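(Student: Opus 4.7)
The plan is to match the two quantities by expressing each of them as a supremum over the irreducible components of $S$ passing through $s$, and then to identify the two suprema term-by-term using Lemma~\ref{lem:dimension lemma} applied to each such component.

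First, by the last sentence of the first definition of this section, $\dim_s S$ equals the supremum of $\dim_s T$ as $T$ ranges over the irreducible components of $S$ passing through $s$. Each such $T$, being a closed subscheme of $S$, is itself irreducible, Jacobson, catenary, and locally Noetherian, and is of finite dimension by hypothesis. Lemma~\ref{lem:constancy of dimension} therefore gives $\dim_s T = \dim T$. Hence
\[
\dim_s S \;=\; \sup_{T \ni s} \dim T,
\]
the supremum being taken over irreducible components $T$ of $S$ containing $s$.

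Next, I would analyse $\dim \mathcal O_{S,s}$. Since $s$ is a closed point, the prime ideals of $\mathcal O_{S,s}$ correspond bijectively to the irreducible closed subsets of $S$ containing $s$, and a chain of prime ideals $\mathfrak{p}_0 \subsetneq \cdots \subsetneq \mathfrak{p}_d = \mathfrak{m}_s$ corresponds to a chain of irreducible closed subsets $\{s\} = Z_d \subsetneq \cdots \subsetneq Z_0$. Maximality of such a chain forces $Z_0$ to be an irreducible component of $S$ through $s$, so
\[
\dim \mathcal O_{S,s} \;=\; \sup_{T \ni s} \codim(\{s\}, T),
\]
again with $T$ ranging over irreducible components of $S$ containing $s$.

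It remains to check that $\codim(\{s\}, T) = \dim T$ for each such $T$. By hypothesis $T$ is equicodimensional of finite dimension, and it is catenary as a closed subspace of the catenary space $S$, so Lemma~\ref{lem:dimension lemma} applies to $T$. Since $s$ is a closed point of $S$ (hence of $T$) and $T$ is Jacobson, $\{s\}$ is a minimal irreducible closed subset of $T$; equicodimensionality then gives $\codim(\{s\}, T) = \dim T$. Combining this with the two displays above yields
\[
\dim_s S \;=\; \sup_{T \ni s} \dim T \;=\; \sup_{T \ni s} \codim(\{s\}, T) \;=\; \dim \mathcal O_{S,s},
\]
as required. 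The only even mildly delicate step is the identification of $\dim\mathcal O_{S,s}$ with the supremum of codimensions over irreducible components, but this is routine prime-ideal bookkeeping using that $s$ is closed; no serious obstacle is expected.
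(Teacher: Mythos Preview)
Your proof is correct and follows essentially the same approach as the paper: both decompose $\dim_s S$ and $\dim\mathcal O_{S,s}$ as suprema over the irreducible components through $s$, then match the terms using Lemma~\ref{lem:constancy of dimension} for one side and equicodimensionality for the other. The only cosmetic difference is that the paper cites \cite[Tag~02IZ]{stacks-project} for $\dim\mathcal O_{S,s}=\codim(s,S)$ rather than spelling out the prime-ideal correspondence, and your invocation of Lemma~\ref{lem:dimension lemma} is harmless but unnecessary, since equicodimensionality alone already gives $\codim(\{s\},T)=\dim T$.
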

\begin{proof}
	We have the equality $\dim \mathcal O_{S,s}  = \codim(s,S)$~\cite[\href{http://stacks.math.columbia.edu/tag/02IZ}{Tag 02IZ}]{stacks-project}.
	If we let $T_1,\ldots, T_n$ denote the irreducible
	components of $S$ passing through $s$, then
	$\codim(s,S) = \max_{i = 1,\ldots,n} \codim(s,T_i),$
	and similarly, $\dim_s S = \max_{i=1,\ldots,n} \dim_s T_i.$
	Thus it suffices to show that
	$\codim(s,T_i) = \dim_s T_i$ for each $T_i$.
	This follows from Lemma~\ref{lem:constancy of dimension},
	which shows that $\dim_s T_i = \dim T_i$,
	together with the assumption that $T_i$ is equicodimensional.
%
\end{proof}
 
We now state the hypothesis that we will make on our base scheme $S$.


\begin{ahyp}
	\label{hyp:good hypotheses}
       	We assume that $S$ is a Jacobson, universally
	catenary, locally Noetherian scheme, all of whose 
	local rings 
	are $G$-rings,
	and with the further property that each 
	irreducible component 
	of $S$ is of finite dimension
	and equicodimensional.
\end{ahyp}

\begin{aremark}
	\label{rem:good hypotheses}
	Since $S$ is catenary by assumption,
	we see that the equivalent conditions of Lemma~\ref{lem:dimension
		lemma} hold for $S$.
	The conditions of Lemma~\ref{lem:equicodimensionality of opens}
	also hold.  Combining these lemmas, we find 
	in particular that each irreducible locally closed subset of $T$
	is equicodimensional.  
	Since $S$ is Jacobson, so  is its locally closed
        subset $T$.
	The finite type points in $T$ are
	then the same as the closed points~\cite[\href{http://stacks.math.columbia.edu/tag/01TB}{Tag 01TB}]{stacks-project},
	and these are also the minimal irreducible closed subsets of $T$.
	Thus to say that $T$ is equicodimensional is to
	say that $\codim(t,T)$ is constant (equal to $\dim T$)
	as $t$ ranges over all closed points of $T$.
\end{aremark}

\begin{alemma}
	\label{lem:transferring hypothesis through ft maps}
	If $X \to S$ is a locally finite type morphism
	of schemes, and if $S$ satisfies Hypothesis~{\em \ref{hyp:good
		hypotheses}},
	then so does $X$.
\end{alemma}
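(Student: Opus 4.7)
The plan is to verify the six conditions of Hypothesis~\ref{hyp:good hypotheses} for $X$. The first four are essentially formal: $X$ is locally Noetherian because $X \to S$ is locally of finite type and $S$ is locally Noetherian; $X$ is Jacobson by \cite[\href{http://stacks.math.columbia.edu/tag/01P5}{Tag 01P5}]{stacks-project}; universal catenarity of $X$ follows from that of $S$ via \cite[\href{http://stacks.math.columbia.edu/tag/02J9}{Tag 02J9}]{stacks-project}; and each local ring $\cO_{X,x}$, being essentially of finite type over the $G$-ring $\cO_{S,f(x)}$, is itself a $G$-ring by the stability of the $G$-ring property under finite-type extensions \cite[\href{http://stacks.math.columbia.edu/tag/07PV}{Tag 07PV}]{stacks-project} and under localization.

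The substantive work is to show that each irreducible component $T$ of $X$, endowed with its reduced structure, is finite-dimensional and equicodimensional. I would let $T' \hookrightarrow S$ denote the reduced scheme-theoretic closure of the image of $T$; then $T'$ is an irreducible closed subset of some irreducible component of $S$, and by Remark~\ref{rem:good hypotheses} is itself Jacobson, catenary, finite-dimensional, and equicodimensional. The induced morphism $T \to T'$ is scheme-theoretically dominant and locally of finite type between integral schemes, with $T'$ universally catenary (as a closed subscheme of $S$). The dimension formula \cite[\href{http://stacks.math.columbia.edu/tag/02JU}{Tag 02JU}]{stacks-project}, applied at any closed point $x \in T$ (whose image $s \in T'$ is also closed by Jacobsonness, so that $\kappa(x)/\kappa(s)$ is finite), then yields
\[
\dim \cO_{T,x} \;=\; \dim \cO_{T',s} + \mathrm{trdeg}_{k(T')} k(T).
\]
By equicodimensionality of $T'$, $\dim \cO_{T',s} = \codim(\{s\}, T') = \dim T'$ is finite and independent of $s$, and the transcendence degree on the right is a finite constant depending only on $T$ and $T'$.

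The right-hand side is thus a finite constant $C$, independent of the closed point $x \in T$. Using \cite[\href{http://stacks.math.columbia.edu/tag/02IZ}{Tag 02IZ}]{stacks-project} to identify $\codim(\{x\},T) = \dim \cO_{T,x}$, and noting that the Jacobson property forces the minimal irreducible closed subsets of $T$ to be precisely its closed points, we find that $\codim(\{x\},T) = C$ is constant as $x$ ranges over such subsets. Hence $T$ is both finite-dimensional (with $\dim T = C$) and equicodimensional, completing the verification. The main obstacle to sidestep is a potential circularity with Lemma~\ref{lem:comparing dimensions}, which presupposes finite-dimensionality of the irreducible components; the argument above avoids this by extracting the uniform bound $C$ directly from the dimension formula, so that finite-dimensionality and equicodimensionality drop out simultaneously rather than being deduced in sequence.
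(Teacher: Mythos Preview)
Your argument is correct and follows the same overall strategy as the paper: pass to an irreducible component $T$ of $X$ and the reduced closure $T'$ of its image in $S$, and then invoke the dimension formula (Tag~02JU) together with the equicodimensionality of $T'$ (supplied by Remark~\ref{rem:good hypotheses}) to see that $\codim(\{x\},T)$ is a finite constant as $x$ ranges over closed points.

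The one place your proof diverges from the paper's is in the handling of finite-dimensionality of $T$. The paper establishes this first, by a separate local argument: it covers $T$ by affine opens $U$ lying over affine opens $V \subset T'$, observes that each such $U$ is finite-dimensional (being of finite type over the finite-dimensional $V$), and then uses Lemma~\ref{lem:constancy of dimension} to deduce that $u \mapsto \dim_u U = \dim_u T$ is constant on each $U$, hence locally constant on $T$, hence globally constant by connectedness. Only afterwards does it turn to the dimension formula for equicodimensionality. You instead read the uniform finite bound $C = \dim T' + \trdeg_{k(T')} k(T)$ directly off the dimension formula, so that finite-dimensionality and equicodimensionality fall out together. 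Your route is a bit shorter and avoids the appeal to Lemma~\ref{lem:constancy of dimension}; the paper's route has the mild virtue of separating the two conclusions, but since both ultimately rest on the same ingredients (universal catenarity of $T'$, Jacobsonness, and the dimension formula), the difference is one of packaging rather than substance.
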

\begin{proof}
	The properties of being Jacobson, of being universally catenary,
	and of the local rings 
	being $G$-rings,
        all pass through a finite type morphism. (In the case of being
        universally catenary, this is immediate from the definition;
        for Jacobson
        see~\cite[\href{http://stacks.math.columbia.edu/tag/02J5}{Tag
          02J5}]{stacks-project}; and for local rings 
  being $G$-rings, see \cite[\href{http://stacks.math.columbia.edu/tag/07PV}{Tag 07PV}]{stacks-project}). 
      Suppose then that $T$ is an irreducible component
      of~$X$; we must show that $T$ is finite-dimensional
      and equicodimensional.

We regard $T$ as an integral scheme, by endowing it with its
induced reduced structure. The composite $T\to X \to S$
	is again locally of finite type, and so replacing $X$ by~$T$,
and $S$ by the closure of the image of $T$, also endowed with its reduced
induced structure (note that by Remark~\ref{rem:good hypotheses},
this closure is again equicodimensional, and since closed immersions
are finite type, the discussion of the preceding paragraph shows
that it also satisfies the other conditions of Hypothesis~\ref{hyp:good
hypotheses}),
	we may assume that each of $X$ and $S$ are integral.
We now have to show that $X$ is finite-dimensional,
and that $\codim(x,X)$ is independent of the closed point $x \in
        X$.


If $x \in X$, we may find an affine neighbourhood $U$ of $x$,
as well as an affine open subset $V\subset S$ containing 
the image of $U$ in $S$.   By assumption $V$ is finite-dimensional,
and $U$ is of finite type over $V$; thus $U$ is also finite-dimensional.
Since $U$ is furthermore irreducible, Jacobson, catenary, and 
locally Noetherian (being open in the irreducible,
Jacobson, catenary, locally Noetherian scheme $X$), we see
from Lemma~\ref{lem:constancy of dimension} that
the function $u \mapsto \dim_u U$ is constant on $U$.  Since $U$ is
open in $X$, we have an equality $\dim_u U = \dim_u X$ for each point
$u \in U$, and hence the function $u \mapsto \dim_u X$ is constant on $U$.
Thus each point of $X$ has a neighbourhood over which $\dim_x X$ is 
constant (and finite valued).  Thus $\dim_x X$ is a locally constant
finite valued function on $X$.  Since $X$ is irreducible (and so in particular
connected) we find that $\dim_x X$ is constant (and finite
valued), and consquently that $X$ is finite-dimensional.

We turn to proving that $X$ equicodimensional.  To this end,
let $x \in X$ be a closed point.
Since $S$ is universally catenary,
        the dimension formula~\cite[\href{http://stacks.math.columbia.edu/tag/02JU}{Tag 02JU}]{stacks-project},
	together with the formula of~\cite[\href{http://stacks.math.columbia.edu/tag/02IZ}{Tag 02IZ}]{stacks-project},
	shows that
	$$\codim(x,X) = \codim(s,S) + \trdeg_{R(S)}R(X) + 
	\trdeg_{\kappa(s)}\kappa(x);$$
	here $s$ denotes the image of $x$
	in $S$, which, being a finite type point of the Jacobson
        scheme $S$, is closed in $S$, and~$R(X)$ (resp.\ $R(S)$) is
        the function field of~$X$ (resp.\ ~$S$).
	Since $\Spec \kappa(x) \to \Spec \kappa(s)$
	is of finite type,
	we have that $\kappa(x)$ is a finite extension of $\kappa(s)$,
	so that the final term on the right-hand side of the formula vanishes.
	Thus $\codim(x,X) -\codim(s,S)$ is constant (i.e.\ independent
	of the closed point $x \in X$).   Since $S$ is equicodimensional,
	the term $\codim(s,S)$ is also independent
	of the closed point $s \in S$; thus $\codim(x,X)$ is indeed
	independent of the closed point $x \in X$.
\end{proof}


We are now able to state and prove the following result,
which relates the dimension of an algebraic stack at a point
to the dimension of a corresponding versal ring.

\begin{alem}
  \label{lem: dimension formula}Suppose that $\cX$ is an algebraic
  stack, locally of finite presentation over a 
  scheme $S$ which satisfies Hypothesis~{\em \ref{hyp:good hypotheses}}.  
  Suppose further
  that $x:\Spec k\to\cX$ is a morphism whose source is the spectrum
  of a field of finite type over $\cO_S$, and that
 $[U/R]\iso\widehat{\mathcal{X}}_x$ is a presentation of $\widehat{\cX}_x$
 by a smooth groupoid in functors,
 with~$U$ and $R$ both Noetherianly pro-representable\footnote{We say
	 that a functor on the category of $S$-schemes is Noetherianally
	 pro-representable if is isomorphic to a functor
	 of the form $\Spf A$, where $A$ is a complete Noetherian local
	 $\cO_S$-algebra equipped with its $\mathfrak m$-adic topology; 
	 here $\mathfrak m$ is the maximal ideal of $A$, of course.}, by
  $\Spf A_x$ and $\Spf B_x$ respectively. Then we have the following formula:
  $$2\dim A_x-\dim B_x=\dim_x(\cX).$$
\end{alem}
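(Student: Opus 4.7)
The plan is to reduce the statement to the smooth-morphism dimension formula (Lemma~\ref{lem:behaviour of dimensions w.r.t. smooth morphisms}), using Artin approximation to realise the given versal data geometrically. Concretely, apply Lemma~\ref{lem: Artin approximation by smooth morphism} to choose a smooth morphism $V \to \cX$ from a scheme, and a point $v \in V$ with residue field $k$, whose composite recovers $x$ and for which $\widehat{\cO}_{V,v} \cong A_x$ compatibly with the versal morphism $\Spf A_x \to \cX$. Form the algebraic space $W := V \times_\cX V$; the first projection $p_1 : W \to V$ is smooth (base-change of a smooth map), its fibre at $v$ is $V \times_\cX \Spec k = V_x$, and the diagonal-type point $(v,v) \in W$ lies over $v$.

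The next step is to identify $\widehat{\cO}_{W,(v,v)}$ with $B_x$. Because $\widehat{\cX}_x \iso [\Spf A_x / \Spf B_x]$ as a groupoid in functors, the groupoid axioms give $\Spf B_x \cong \Spf A_x \times_{\widehat{\cX}_x} \Spf A_x$. On the other hand, the formation of $V \times_\cX V$ is compatible with completion at $(v,v)$, as argued in the proof of Lemma~\ref{lem:branches}: the morphism $W \to V$ being smooth forces $\widehat{\cO}_{W,(v,v)}$ (computed via an étale scheme cover of $W$ through a point with residue field $k$) to be identified with the versal-ring fibre product $\widehat{\cO}_{V,v} \times_{\widehat{\cX}_x} \widehat{\cO}_{V,v} = A_x \times_{\widehat{\cX}_x} A_x \cong B_x$.

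By Lemma~\ref{lem:transferring hypothesis through ft maps}, both $V$ and any étale scheme cover of $W$ satisfy Hypothesis~\ref{hyp:good hypotheses}, being locally of finite type over $S$. The points $v$ and $(v,v)$ are finite-type (their residue fields are of finite type over $\cO_S$, and hence closed by Jacobsonness), so Lemma~\ref{lem:comparing dimensions} together with the invariance of dimension under completion yields
\[
\dim_v V \;=\; \dim \cO_{V,v} \;=\; \dim A_x, \qquad \dim_{(v,v)} W \;=\; \dim B_x.
\]
Applying Lemma~\ref{lem:behaviour of dimensions w.r.t. smooth morphisms} to $p_1 : W \to V$ at $(v,v)$, using $W_v = V_x$, gives $\dim_{(v,v)} W = \dim_v V + \dim_v V_x$. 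Substituting into Definition~\ref{def:dimension for stacks}, $\dim_x \cX = \dim_v V - \dim_v V_x$, produces
\[
\dim_x \cX \;=\; 2\dim_v V - \dim_{(v,v)} W \;=\; 2\dim A_x - \dim B_x,
\]
as desired.

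The main obstacle is the identification $\widehat{\cO}_{W,(v,v)} \cong B_x$: one must ensure the abstract groupoid presentation of $\widehat{\cX}_x$ is genuinely realised geometrically by the fibre product $V \times_\cX V$ upon completion. This requires compatibility of completion with the relevant $2$-fibre product (an algebraic space), and some care with passing to an étale scheme cover of $W$ through a point with residue field $k$ so that Lemma~\ref{lem:comparing dimensions} applies strictly on the nose; however, the argument is entirely parallel to the verification performed in Lemma~\ref{lem:branches}, so no new ingredients are needed.
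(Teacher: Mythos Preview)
Your proposal is correct and follows essentially the same route as the paper: Artin-approximate to a smooth chart $V\to\cX$ with $\widehat{\cO}_{V,v}\cong A_x$, form $W=V\times_{\cX}V$ and identify $\widehat{\cO}_{W,(v,v)}\cong B_x$, then combine the stack dimension formula $\dim_x\cX=2\dim_v V-\dim_{(v,v)}W$ (the paper packages this via Remark~\ref{rem:computing dims}, which amounts to your use of Lemma~\ref{lem:behaviour of dimensions w.r.t. smooth morphisms} and Definition~\ref{def:dimension for stacks}) with Lemmas~\ref{lem:transferring hypothesis through ft maps} and~\ref{lem:comparing dimensions} to convert the scheme-theoretic dimensions into $\dim A_x$ and $\dim B_x$. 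Your explicit justification of the identification $\widehat{\cO}_{W,(v,v)}\cong B_x$ is slightly more detailed than the paper's, but the argument is the same.
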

\begin{proof}
	By Lemma~\ref{lem: Artin approximation by smooth morphism},
	we may find a smooth morphism $V \to \cX$, whose source is a scheme,
	containing a point $v \in V$ of residue field $k$, such that induced
	morphism $v = \Spec k \to V \to \cX$ coincides with $x$,
	and such that $\widehat{\cO}_{V,x}$ may be identified with $A_x$.
	If we write $W :=  V\times_{\cX} V,$ and we write $w := (v,v) \in W,$
	then we may furthermore identify $\widehat{\cO}_{W,w}$ with $B_x$.
	Now Remark~\ref{rem:computing dims} shows that
	$$\dim_x{\cX} = \dim_v V - \dim_{w}(W_v) = \dim_v V - 
	(\dim_w W - \dim_v V) = 2\dim_v V - \dim_w W.$$
	Since $v$ is a finite type point of $V$, 
	we have that
	$\dim_v V = \dim \cO_{V,v} = \dim \widehat{\cO_{V,v}} = \dim A_x$
	(where we apply
	Lemmas~\ref{lem:transferring hypothesis through ft maps}
	and~\ref{lem:comparing dimensions} to obtain the first equality),
	and similarly $\dim_w W = \dim B_x$.  Thus the formula of the lemma
	is proved.
\end{proof}

\printbibliography

\end{document}